\definecolor{Chocolat}{rgb}{0.36, 0.2, 0.09}
\definecolor{BleuTresFonce}{rgb}{0.215, 0.215, 0.36}
\definecolor{EgyptianBlue}{rgb}{0.06, 0.2, 0.65}
\newtheorem{theorem}{Theorem}
\newtheorem{lemma}{Lemma}
\newtheorem{proposition}{Proposition}
\newtheorem{conjecture}{Conjecture}
\newtheorem{question}{Question}
\theoremstyle{definition}
\DeclareMathAlphabet{\pazocal}{OMS}{zplm}{m}{n}
\DeclareMathAlphabet{\mathbbold}{U}{bbold}{m}{n}
\def\kk{\mathbbold{k}}
\begin{document}

\title[Nonassociative weak nilpotence]{Nilpotence, weak nilpotence, and the nil property in the nonassociative world: computations and conjectures}
\author{Vladimir Dotsenko}

\address{ 
Institut de Recherche Math\'ematique Avanc\'ee, UMR 7501, Universit\'e de Strasbourg et CNRS, 7 rue Ren\'e-Descartes, 67000 Strasbourg CEDEX, France}

\email{vdotsenko@unistra.fr}

\date{}

\begin{abstract}
We present some results, both rigorously mathematical and computational, showing unexpected relations between different identities expressing nilpotence in nonassociative algebras, and formulate a number of conjectural generalizations and related questions.
\end{abstract}

\subjclass[2010]{
17B30 (Primary),   
17A30, 18D50 (Secondary) 
}
\maketitle

\section{Introduction}

Suppose that $A$ is a vector space equipped with a bilinear \emph{commutative} nonassociative binary operation $a_1,a_2\mapsto a_1a_2$.
Let us define an $n$-linear operation $t_n(a_1,\ldots,a_n)$ as the sum of all distinct multilinear terms of degree $n$ that one can define using this operation. For instance,
 \[
t_3(a_1,a_2,a_3)=(a_1a_2)a_3+(a_1a_3)a_2+a_1(a_2a_3),
 \]
and for the given $n$, the operation $t_n$ is a sum of $(2n-3)!!=1\cdot 3\cdots (2n-3)$ terms. If $t_d$ vanishes identically on $A$ for $d\ge n$, we shall say that the commutative algebra $A$ is weakly nil of index $n$. More generally, if $A$ has a totally commutative $k$-ary operation $a_1,\ldots,a_k\mapsto \phi(a_1 a_2 \ldots a_k)$, we define the operation $t_n^{(k)}(a_1,\ldots,a_n)$ to be the sum of all distinct multilinear terms of degree $n$ that one can define using this operation (here necessarily $n\equiv 1\pmod{k-1}$); vanishing of all the operations $t_d^{(k)}$ for $d\ge n$ defines weakly nil algebras of index $n$. For a ternary map, these conditions appear in Yagzhev's reformulation of the Jacobian Conjecture. Precise details can be found in \cite{Yag} (in Russian, not available online) and in two recent papers \cite{MR3229356,MR4038058}, but one of the key ideas is simple enough to be explained here.  Roughly speaking, if we consider a polynomial map $A\to A$ sending $x$ to $x-\phi(x,x,x)$, then existence of the polynomial inverse of this map is equivalent to the condition of weak nil property of some finite index to hold, and the constant determinant condition is equivalent to the so called Engel property of the operation $\phi(-,-,-)$, casting the Jacobian Conjecture for cubic maps (equivalent to the conjecture in its full glory, see \cite{MR663785,MR592226}) in the language of identities in ternary algebras. 

The main goal of this note is to suggest that there are natural noncommutative analogues of the elements $t_d^{(k)}$ which seem to have very subtle algebraic properties (here, once again, $n\equiv 1\pmod{k-1}$). Namely, if $A$ is a vector space equipped with a bilinear \emph{noncommutative} nonassociative binary operation 
 \[
a_1,a_2\mapsto a_1a_2,
 \]
we define an $n$-linear operation $w_n(a_1,\ldots,a_n)$ as the sum of all distinct multilinear terms of degree $n$ that one can define using this operation if putting the arguments $a_1,\ldots,a_n$ in the natural order. For instance,
 \[
w_3(a_1,a_2,a_3)=(a_1a_2)a_3+a_1(a_2a_3),
 \]
and for the given $n$, the operation $w_n$ is a sum of $\frac{1}{n}\binom{2n-2}{n-1}$ terms. More generally, if $A$ has a $k$-ary operation $a_1,\ldots,a_k\mapsto (a_1 a_2 \ldots a_k)$, we define the operation $w_n^{(k)}(a_1,\ldots,a_n)$ to be the sum of all distinct multilinear terms of degree $n$ that one can define using this operation if putting the arguments $a_1,\ldots,a_n$ in the natural order. The elements $w_{2k-1}^{(k)}$ originally emerged in questions of homotopical algebra and were used to construct nontrivial cycles in bar complexes of some operads \cite[Th.~9]{MR4106894}. 
In fact, each individual identity $w_d^{(k)}=0$ seems to be very powerful: for given $d$ and $k$, it seems to imply some unexpected nilpotence conditions, some of which are proved in this note, and some other conjectured. We show that the identity $w_d^{(k)}=0$ for a $k$-ary operation implies the identity $t_d^{(k)}=0$ for its total symmetrization, suggesting that the noncommutative story may help to shed some light on the commutative one. 

Our intuition and methods come from the operad theory, and some of the proofs heavily rely on experimental data obtained using the \texttt{Haskell} implementation \cite{OpGb} of Gr\"obner bases for operads \cite{MR3642294,MR2667136} (throughout this paper, we think of Gr\"obner bases as of  ``good'' systems of rewriting rules, since that is how they will be used). Since some of the conjectures below can probably benefit from different viewpoints, such as the structure theory of algebras of various types, the paper is written so that all statements and most conjectures can be comprehended by general mathematical audience not familiar with operads, and only certain proofs require fluency in some aspects of the operad theory. For simplicity, the ground field $\kk$ is assumed to be of zero characteristic; we do not have enough computational evidence to make characteristic-free claims, and we suspect that many of our conjectures are false in positive characteristic. There are three parts to this story: the case of a binary operation, where a lot of computational data is available, the case of a ternary operation where some computational data is available, and the extrinsic motivation alluded to in the beginning is present, and the case of a $k$-ary operation, where only some basic theoretical results are available. These three parts are presented in three separate sections of this paper.

\subsection*{Acknowledgements. } I am grateful to Murray Bremner who provided an independent verification of the first claim of Theorem \ref{th:BNil} using \texttt{Maple}, to Ualbai Umirbaev for encouraging him to write down these results, to Dmitry Piontkovski and Pasha Zusmanovich for useful discussions, and to Farukh Mashurov for editorial comments. The author's research was supported by Institut Universitaire de France, by the University of Strasbourg Institute for Advanced Study (USIAS) through the Fellowship USIAS-2021-061 within the French national program ``Investment for the future'' (IdEx-Unistra), and by the French national research agency project ANR-20-CE40-0016. This paper was written during the author's stay at Max Planck Institute for Mathematics in Bonn, and he wishes to express his gratitude to that institution for the financial support and excellent working conditions. 

\section{Binary nonassociative product}

In this section, $A$ is an algebra with a (not necessarily associative) product $A\otimes A\to A$. 

\subsection{Noncommutative case}

As explained in Introduction, we shall consider, for each $n$, the multilinear operation $w_n(a_1,\ldots,a_n)$ defined as the sum of all distinct multilinear terms of degree $n$ that one can define using the product in $A$ if putting the arguments $a_1,\ldots,a_n$ in the natural order. It can be equivalently defined inductively as follows: we set $w_1(a_1)=a_1$ and
 \[
w_n(a_1,\ldots,a_n)=\sum_{i=1}^{n-1}w_i(a_1,\ldots,a_i)w_{n-i}(a_{i+1},\ldots,a_n).
 \]
We say that an algebra $A$ is \emph{weakly nilpotent of index $n$} if the identity $w_d=0$ holds in $A$ for all $d\ge n$. 

The first natural question to ask here is to determine a minimal finite system of identities defining weakly nilpotent algebras of index $n$. 

\begin{proposition}\leavevmode
\begin{itemize}
\item For each $n$, the identities $w_n=w_{n+1}=\cdots=w_{2n-2}=0$ imply weak nilpotence of index $n$.
\item The identity $w_3=0$ implies weak nilpotence of index $3$.
\item The identity $w_4=0$ does not imply $w_5=0$, and thus does not imply weak nilpotence of index $4$. 
\item The identities $w_4=w_5=0$ imply weak nilpotence $4$. 
\end{itemize}
\end{proposition}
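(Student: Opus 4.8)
The plan is to work throughout in the free nonsymmetric operad $\calM$ generated by a single binary operation, whose arity-$n$ component has as a basis the planar binary trees with $n$ leaves; here $w_n$ is the sum of all these basis trees, and the assertion ``the identities $w_a=w_b=\cdots=0$ imply weak nilpotence of index $n$'' translates into the membership $w_d\in(w_a,w_b,\dots)$ in $\calM$ for every $d\ge n$, where $(w_a,w_b,\dots)$ is the operadic ideal generated by the listed elements. I would first dispatch the first bullet by strong induction on $m$ using the defining recursion $w_m=\sum_{i=1}^{m-1}w_i\,w_{m-i}$: for $m\ge 2n-1$ every summand $w_i\,w_{m-i}$ has $\max(i,m-i)\ge\lceil m/2\rceil\ge n$, so one of its two factors carries an index in the range $[n,m-1]$ and vanishes by induction, while the hypotheses $w_n=\cdots=w_{2n-2}=0$ supply exactly the base cases $n\le m\le 2n-2$, which are the only ones where a split can have both indices below $n$. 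With the first bullet available, the second and fourth bullets reduce to single membership statements, namely $w_4\in(w_3)$ and $w_6\in(w_4,w_5)$, obtained by applying the first bullet with $n=3$ and $n=4$ respectively.

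For the second bullet, working modulo $(w_3)$ the recursion gives $w_4\equiv w_2\,w_2=(a_1a_2)(a_3a_4)$, since the outer terms $w_1\,w_3$ and $w_3\,w_1$ already lie in $(w_3)$. I would then rewrite $(a_1a_2)(a_3a_4)$ using substituted instances of $w_3=0$ along two different routes, collapsing the left factor first in one and the right factor first in the other; these yield $(a_1a_2)(a_3a_4)\equiv(a_1(a_2a_3))a_4$ and $(a_1a_2)(a_3a_4)\equiv-(a_1(a_2a_3))a_4$. Subtracting forces $2\,(a_1(a_2a_3))a_4\equiv 0$, hence $(a_1a_2)(a_3a_4)\equiv 0$ in characteristic $0$, so $w_4\in(w_3)$.

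For the third bullet I would establish the \emph{non}-containment $w_5\notin(w_4)$, either by exhibiting an explicit algebra satisfying $w_4=0$ on which $w_5$ does not vanish, or, more in the spirit of the paper, by computing an operadic Gr\"obner basis of $(w_4)$ and checking that the normal form of $w_5$ is nonzero. This separates $(w_4)$ from $(w_4,w_5)$ and shows that the extra identity is genuinely needed.

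The fourth bullet is the substantial one. Modulo $I=(w_4,w_5)$ the top-split recursion collapses $w_6=\sum_{i=1}^{5}w_i\,w_{6-i}$ to its middle term, giving $w_6\equiv w_3\,w_3$, every other summand containing a factor $w_4$ or $w_5$. The recursion applied to $w_5$ (using $w_4\equiv 0$) supplies the auxiliary rule $w_2\,w_3\equiv-\,w_3\,w_2$, and applied to $w_4$ supplies $w_2\,w_2\equiv-(w_1\,w_3+w_3\,w_1)$, both valid after arbitrary operadic substitution into their leaves. Using these to rewrite $w_3\,w_3$ is where the real difficulty lies: a single pass, splitting one $w_3$ into a sum of two products, pushing the other $w_3$ across via $w_2\,w_3\equiv-\,w_3\,w_2$, and folding the result back into the expansion of $w_4$, reduces $w_3\,w_3$ modulo $I$ to a single degree-$6$ tree, but that tree is not visibly in $I$ from these two rules alone, and iterating them produces a proliferating list of trees whose joinability is not apparent by hand. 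The clean way to close the argument is to complete $\{w_4,w_5\}$ to an operadic Gr\"obner basis, adjoining the finitely many relations arising from tree overlaps (S-polynomials), and then verify that $w_6$ reduces to $0$. This final reduction is the computational heart of the statement and the step I expect to be the main obstacle, precisely because the extra relations needed to collapse the leftover monomial are exactly what a naive hand rewriting misses.
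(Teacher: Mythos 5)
Your proposal is correct and follows the same overall strategy as the paper: the first bullet by induction on the recursion $w_m=\sum_i w_iw_{m-i}$ (identical to the paper's argument), the second and fourth bullets reduced to the memberships $w_4\in(w_3)$ and $w_6\in(w_4,w_5)$ and then settled via the first bullet, and the third and fourth bullets resting on operadic Gr\"obner basis computations. The one genuinely different ingredient is your treatment of the second bullet: the paper simply cites the known nilpotence of anti-associative algebras, whereas you give a short self-contained derivation --- collapsing $(a_1a_2)(a_3a_4)$ along two different anti-associativity routes to get $\pm(a_1(a_2a_3))a_4$, hence $2(a_1(a_2a_3))a_4\equiv 0$ --- which is a nice elementary substitute (your sign computation is correct). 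For the last two bullets your plan matches the paper exactly, but note that the paper actually carries the computations out and records their output: the reduced Gr\"obner basis of $(w_4,w_5)$ (under which $L(A)$ is nilpotent of index $5$) differs from that of $(w_4)$ (under which $(a_1(a_2(a_3(a_4(a_5(a_6a_7))))))$ is a normal form, so $L(A)$ is nilpotent of index $7$ and not less), which is what separates the two ideals and proves $w_5\notin(w_4)$; and $w_6$ reduces to zero modulo the first basis. So your write-up is the right reduction, and your honest flagging of the Gr\"obner completion as the computational heart is accurate --- that is precisely the part the paper delegates to the machine; your hand observations $w_6\equiv w_3w_3$, $w_2w_3\equiv -w_3w_2$, and $w_2w_2\equiv-(w_1w_3+w_3w_1)$ modulo the ideal are all correct but, as you suspect, insufficient without the S-polynomial completion.
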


\begin{proof}
The first statement is immediate from the inductive definition of our operations: for $k\ge 2n-1$, the operation $w_k$ is equal to the sum of terms in which each term contains $w_d$ for $n\le d<k$, so one may argue by induction. The second statement follows from the well known result of nilpotence of anti-associative algebras \cite{MR1489904}. The last two statements rely on a computer calculation using the \texttt{Haskell} program \cite{OpGb} for computing Gr\"obner bases for operads. Namely, it turns out that the reduced Gr\"obner bases of the nonsymmetric operads encoding the identity $w_4=0$ and the identities $w_4=w_5=0$ are different, showing that the two systems of identities are not equivalent. The former one is given in Theorem \ref{th:BNil} below, and the second one will be used here to show that the identities $w_4=w_5=0$ imply the only ``missing'' identity $w_6=0$. 

\begin{lemma}
For the graded path-lexicographic order of the free operad, the reduced Gr\"obner basis for the nonsymmetric operad encoding the identities $w_4=w_5=0$ gives the following set of rewriting rules: 
\begin{itemize}
\item one element of arity $4$ 
 \[
(((a_1 a_2) a_3) a_4) \mapsto  - ((a_1 (a_2 a_3)) a_4)  -  ((a_1 a_2) (a_3 a_4))  -  (a_1 ((a_2 a_3) a_4))  -  (a_1 (a_2 (a_3 a_4))),
 \]
\item two elements of arity $5$
\begin{gather}
((a_1 a_2) ((a_3 a_4) a_5))  \mapsto  (a_1 ((a_2 a_3) (a_4 a_5)))  +  (a_1 (a_2 (a_3 (a_4 a_5)))),\\
((a_1 (a_2 (a_3 a_4))) a_5) \mapsto - (a_1 ((a_2 (a_3 a_4)) a_5))  -  (a_1 (a_2 ((a_3 a_4) a_5)))  -  (a_1 (a_2 (a_3 (a_4 a_5)))),
\end{gather}
\item and four elements of arity $6$
\begin{gather}
((a_1 a_2) (a_3 ((a_4 a_5) a_6))) \mapsto  - ((a_1 a_2) (a_3 (a_4 (a_5 a_6))))  -  (a_1 (a_2 (a_3 ((a_4 a_5) a_6)))),\\
(a_1 ((a_2 a_3) (a_4 (a_5 a_6)))) \mapsto  - (a_1 (a_2 ((a_3 a_4) (a_5 a_6)))),\\
(a_1 ((a_2 (a_3 a_4)) (a_5 a_6))) \mapsto  - (a_1 (a_2 ((a_3 a_4) (a_5 a_6)))),\\
(a_1 (a_2 (a_3 (a_4 (a_5 a_6))))) \mapsto 0 .
\end{gather}
\end{itemize}
\end{lemma}

In particular, for in this case the algebra generated by left multiplications $l_a\colon x\mapsto ax$ is nilpotent of index $5$, which, once compared with the Gr\"obner basis discussed in the proof of Theorem \ref{th:BNil}, shows that the identity $w_5=0$ is not a consequence of the identity $w_4=0$ . 

Having a Gr\"obner basis for a system of identities, there is a very easy algorithm that allows to check if another identity follows from them. First, one may assume the identity homogeneous and multilinear (since we work over a field of characteristic zero), and also one may assume that all variables appear in it in the standard order, since we start from identities of that sort (in other words, we work with a nonsymmetric operad). That algorithm proceeds as follows. It examines monomials that appear in our identity, and for each monomial that is ``divisible'' (in the sense of operads/verbal ideals) by a monomial on the left hand side of some rewriting rule, it reduces that monomial using the corresponding rewriting rule. After finitely many steps (bounded, for identities of arity $n$, by the total dimension of the space of identities of that arity), one shall obtain an identity that cannot be reduced further. The original identity is a consequence of the given ones if and only if the irreducible identity thus obtained is zero. 

In particular, using our Gr\"obner basis, it is easy to check that $w_6=0$ follows from these two identities, and so, according to the first statement, any algebra satisfying the identities $w_4=w_5=0$ is weakly nilpotent of index $4$.
\end{proof}

We conjecture that the same phenomenon holds for each $n$.

\begin{conjecture}
For each $n$, the identities $w_n=w_{n+1}=\cdots=w_{2n-3}=0$ imply the identity $w_{2n-2}=0$, and hence imply weak nilpotence of index $n$.
\end{conjecture}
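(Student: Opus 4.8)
The plan is to reduce the conjecture to a single, transparent operadic membership, and then to attack that membership by constructing an explicit certificate. Throughout I work in the nonsymmetric operad $\mathcal M$ freely generated by one binary operation $\mu$, so that $\mathcal M(m)$ is spanned by planar binary trees with $m$ leaves, and $w_m=\sum_{i=1}^{m-1}\mu\circ(w_i,w_{m-i})$ is the sum of all such trees. Let $I=(w_n,\dots,w_{2n-3})$ be the operadic ideal generated by the hypotheses. Since an operadic ideal is stable under composing any of its elements into an arbitrary input, each summand $\mu\circ(w_i,w_{2n-2-i})$ of $w_{2n-2}$ lies in $I$ as soon as $i\in[n,2n-3]$ or $2n-2-i\in[n,2n-3]$, that is, for every $i$ except $i=n-1$. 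Hence
\[
w_{2n-2}\equiv w_{n-1}\cdot w_{n-1}\pmod I,\qquad w_{n-1}\cdot w_{n-1}:=\mu\circ(w_{n-1},w_{n-1}),
\]
so the entire conjecture is equivalent to the membership $w_{n-1}\cdot w_{n-1}\in(w_n,\dots,w_{2n-3})$; once this holds, $w_{2n-2}=0$, and weak nilpotence of index $n$ follows from the first statement of the Proposition. For $n=4$ this membership is exactly the content of the computation in the Lemma above.

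To prove the membership in general I would search for an explicit certificate generalizing the small cases. It is convenient to record two natural families of ideal elements of arity $2n-2$: the \emph{inner insertions} $\partial^{(k)}w_d:=\sum_{i=1}^{d}w_d\circ_i w_k$ (for $d\in[n,2n-3]$ and $k=2n-1-d$), whose coefficient on a tree $T$ equals the number $N_k(T)$ of internal vertices of $T$ whose subtree has exactly $k$ leaves; and the \emph{outer graftings}, obtained by placing $w_d$ beneath additional product vertices. For $n=3$ the certificate is visible by hand,
\[
w_2\cdot w_2=\tfrac12\bigl(\partial^{(2)}w_3-\mu\circ_1 w_3-\mu\circ_2 w_3\bigr),
\]
and an analogous certificate for $n=4$ can be extracted from the displayed Gr\"obner basis. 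I would tabulate these, guess a closed form for the coefficients of the inner and outer insertions as functions of $n$, and then verify the resulting identity in $\mathcal M(2n-2)$ by a node-counting (inclusion--exclusion over vertices with prescribed leaf-counts) argument. Since $w_{n-1}\cdot w_{n-1}$ is the sum of precisely those trees whose root splits as $(n-1,n-1)$, the arithmetic content of the identity is that, after forming the guessed combination, every tree with an \emph{unbalanced} root split cancels.

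The hard part will be exactly this uniform cancellation. The single-insertion operator satisfies $\partial^{(2)}w_d=w_{d+1}+R_{d+1}$ with $R_{d+1}=\sum_T\bigl(N_2(T)-1\bigr)T$ the ``excess-cherry'' sum, and for $d\in[n,2n-4]$ the vanishing of $w_{d+1}$ forces $R_{d+1}\equiv0\pmod I$ by telescoping. The telescoping, however, determines only the intermediate terms: at the top degree it merely re-expresses $R_{2n-2}$ through $w_{2n-2}$ itself, so it returns the goal rather than an independent handle, the obstruction being that $w_{n-1}$ is not among the generators. Equivalently, the linear system expressing $w_{n-1}\cdot w_{n-1}$ through the $O(n)$ insertion families is wildly overdetermined, since $\mathcal M(2n-2)$ has the Catalan dimension $\frac{1}{2n-2}\binom{4n-6}{2n-3}$, so the mere existence of a closed-form certificate is a combinatorial coincidence that must be explained rather than checked case by case.

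I expect the crux to be either the production of such a closed-form certificate, or, equivalently, a description of the reduced Gr\"obner basis of $(w_n,\dots,w_{2n-3})$ uniform in $n$ that visibly reduces $w_{2n-2}$ to zero. The available evidence is encouraging but thin: the $n=3$ certificate has denominator a power of $2$, and one would want to understand whether the denominators (and hence the positive-characteristic behaviour, about which the paper is deliberately cautious) stay controlled as $n$ grows. Absent a structural reason, I would first compute the certificates for $n=5,6$ from Gr\"obner bases, look for a pattern in the coefficients indexed by the root-split type of each tree, and only then attempt the general node-counting identity.
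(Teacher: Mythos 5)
The statement you set out to prove is one of the paper's \emph{conjectures}: the paper itself establishes it only for $n=3$ (via the known nilpotence of anti-associative algebras) and $n=4$ (via an explicit Gr\"obner basis for $w_4=w_5=0$, from which $w_6=0$ is checked by reduction), and explicitly leaves the general case open. So there is no paper proof to compare against, and your attempt must be judged as a proposed new proof. As such it has a genuine gap: it is a research plan, not an argument.

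What you do prove is correct and genuinely useful. The reduction is right: since the operadic ideal $I=(w_n,\dots,w_{2n-3})$ absorbs every summand $\mu\circ(w_i,w_{2n-2-i})$ of $w_{2n-2}$ except the balanced one $i=n-1$, the conjecture is equivalent to the single membership $\mu\circ(w_{n-1},w_{n-1})\in I$. Your $n=3$ certificate $w_2\cdot w_2=\tfrac12\bigl(\sum_{i=1}^{3} w_3\circ_i w_2-\mu\circ_1 w_3-\mu\circ_2 w_3\bigr)$ also checks out: both sides equal the single tree $(a_1a_2)(a_3a_4)$. But everything past that point is conditional --- ``I would search'', ``I would tabulate'', ``guess a closed form'' --- and the one general mechanism you do describe fails exactly where it matters, as you yourself concede: telescoping $\sum_i w_d\circ_i w_2=w_{d+1}+R_{d+1}$ through the range $d\in[n,2n-3]$ becomes circular at the top, expressing $R_{2n-2}$ through $w_{2n-2}$ itself rather than producing the balanced tree from the ideal, precisely because $w_{n-1}\notin I$. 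No certificate is exhibited for any $n\ge 5$, no uniform cancellation is proved, and the conjecture is left exactly as open as the paper leaves it. Your proposed next step (extracting the $n=5,6$ certificates from Gr\"obner bases and looking for a coefficient pattern indexed by the root-split type) is a sensible one, but until such a pattern is found and the resulting node-counting identity is verified for all $n$, there is no proof here.
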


Our next result shows that the identity $w_4=0$ implies some unexpected nilpotence for left multiplications.

\begin{theorem}\label{th:BNil}
Let $A$ be an algebra satisfying the identity $w_4=0$.
\begin{enumerate}
\item The associative algebras $L(A)$ generated by left multiplications $l_a\colon x\mapsto ax$ and $R(A)$ generated by right multiplications $r_a\colon x\mapsto xa$ are nilpotent; in both cases, the nilpotence is of index $7$ (and not less, in general).
\item In general, the multiplicative universal enveloping algebra $U(A)$ generated by both left and right multiplications is not nilpotent. 
\end{enumerate}
\end{theorem}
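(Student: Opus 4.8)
The plan is to translate every assertion into the language of the nonsymmetric operad $\calP$ generated by one binary operation modulo the single relation $w_4=0$. Its arity-$n$ component is spanned by planar binary trees with $n$ leaves (equivalently, the $C_{n-1}$ bracketings of $a_1,\dots,a_n$), the operadic structure being tree grafting. Under this dictionary a product of left multiplications $l_{a_1}\cdots l_{a_k}$ applied to the argument $x$ is exactly the arity-$(k+1)$ \emph{right comb} $\rho_{k+1}:=a_1(a_2(\cdots(a_k x)\cdots))$, while $r_{a_1}\cdots r_{a_k}$ yields the mirror \emph{left comb}. Hence $L(A)$ is nilpotent of index $n$ precisely when $\rho_n$ vanishes in $\calP$ but $\rho_{n-1}$ does not, and likewise for $R(A)$; the theorem thus becomes a collection of statements about which comb monomials are zero in $\calP$.

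For the first statement I would compute the reduced Gr\"obner basis of $\calP$ for the graded path-lexicographic order, using the operadic Gr\"obner basis machinery and its \texttt{Haskell} implementation. The relation $w_4=0$ itself contributes the single arity-$4$ rule rewriting the left comb $(((a_1a_2)a_3)a_4)$ (its leading term), and the remaining rules arise from operadic S-polynomials in arities $5,6,7,8$. I would carry the computation up to arity $8$ and record the resulting normal monomials. The decisive output is that the arity-$8$ right comb $\rho_8$ rewrites to zero, giving $l_{a_1}\cdots l_{a_7}=0$, i.e.\ $L(A)^7=0$, whereas $\rho_7$ is irreducible and hence a nonzero normal form, so $L(A)^6\neq 0$ in the free $\calP$-algebra and the index is exactly $7$. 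For $R(A)$ I would invoke the left--right symmetry: the mirror map on planar trees combined with reversing the order of arguments is an anti-automorphism of the free operad fixing each $w_n$, so $A^{\mathrm{op}}$ again satisfies $w_4=0$ and $R(A)=L(A^{\mathrm{op}})$; the index-$7$ nilpotence of $R(A)$ is then formal. Murray Bremner's independent \texttt{Maple} verification provides a safeguard for this finite but sizeable reduction.

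For the second statement the goal is to produce, for a suitable $A$ satisfying $w_4=0$, arbitrarily long words in the $l_a$ and $r_a$ acting nontrivially. Taking $A$ to be the free $\calP$-algebra, it suffices to exhibit an infinite family of nonzero tree monomials in $\calP$, one in each arity, each representing an alternating product of left and right multiplications applied to a fixed slot — for instance the ``zigzag'' monomials coming from words $\cdots l_a r_b l_a r_b\,(x)$. I would pin down such a family and verify, against the Gr\"obner basis, that none of its members is divisible by a leading term, so that they all survive as normal forms; their nonvanishing then shows directly that $U(A)$ contains operators of every length and is therefore not nilpotent.

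The first statement is ultimately a bounded computation: everything needed lives in arity $\le 8$, so once the Gr\"obner basis is known the reductions are mechanical and cross-checkable. The genuine obstacle is the second statement, since non-nilpotence of $U(A)$ is an assertion in unboundedly many arities that no finite Gr\"obner basis calculation can settle. One must instead recognize the stable shape of the (a priori infinite) set of leading terms of $\calP$ and prove that the chosen zigzag monomials avoid all of them for \emph{every} $n$. Establishing this persistent combinatorial description of the normal forms of $\calP$, rather than performing any single reduction, is where the argument has to do real work.
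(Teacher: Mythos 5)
Your proposal is correct and follows essentially the same route as the paper: compute the finite reduced Gr\"obner basis for the nonsymmetric operad encoding $w_4=0$ in the graded path-lexicographic order, read off nilpotence of index $7$ for $L(A)$ from the arity-$8$ right-comb rule and sharpness from the arity-$7$ right comb being a normal form, get $R(A)$ by the opposite-algebra symmetry, and exhibit non-nilpotence of $U(A)$ via the zig-zag monomials, which survive as normal forms since none of the finitely many leading terms divides them. The only nuance worth noting is that the ``real work'' you anticipate for the second statement dissolves once the computation certifies that the reduced Gr\"obner basis is finite (all leading terms lie in arities $4$ through $8$), so checking that the self-similar zig-zag family avoids every leading term is a direct finite inspection, exactly as in the paper.
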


\begin{proof}
Using the \texttt{Haskell} program \cite{OpGb}\footnote{Some of the results of the computation were verified by Murray Bremner using \texttt{Maple}.}, we, once again, found a finite Gr\"obner basis.

\begin{lemma}
For the graded path-lexicographic order of the free operad, the reduced Gr\"obner basis for the nonsymmetric operad encoding the identity $w_4=0$ gives the following set of rewriting rules: 
\begin{itemize}
\item one element of arity $4$
 \[
(((a_1 a_2) a_3) a_4) \mapsto  - ((a_1 (a_2 a_3)) a_4)  -  ((a_1 a_2) (a_3 a_4))  -  (a_1 ((a_2 a_3) a_4))  -  (a_1 (a_2 (a_3 a_4))),
 \]
\item one element of arity $5$ 
\begin{multline}
((a_1 (a_2 (a_3 a_4))) a_5) \mapsto ((a_1 a_2) ((a_3 a_4) a_5))  -  (a_1 ((a_2 (a_3 a_4)) a_5))  -  (a_1 ((a_2 a_3) (a_4 a_5)))\\  -  (a_1 (a_2 ((a_3 a_4) a_5)))  -  2 (a_1 (a_2 (a_3 (a_4 a_5)))),
\end{multline}
\item two elements of arity $6$
\begin{multline}
((a_1 a_2) ((a_3 a_4) (a_5 a_6))) \mapsto  - ((a_1 a_2) (a_3 ((a_4 a_5) a_6)))  -  ((a_1 a_2) (a_3 (a_4 (a_5 a_6)))) \\ -  (a_1 ((a_2 a_3) ((a_4 a_5) a_6)))  -  (a_1 (a_2 (a_3 ((a_4 a_5) a_6))))  +  (a_1 (a_2 (a_3 (a_4 (a_5 a_6))))),
\end{multline}
\begin{multline}
((a_1 (a_2 a_3)) ((a_4 a_5) a_6)) \mapsto ((a_1 a_2) ((a_3 (a_4 a_5)) a_6))  -  ((a_1 a_2) (a_3 ((a_4 a_5) a_6))) \\ -  ((a_1 a_2) (a_3 (a_4 (a_5 a_6))))  -  (a_1 ((a_2 a_3) ((a_4 a_5) a_6))) \\ -  2 (a_1 (a_2 (a_3 ((a_4 a_5) a_6))))  -  (a_1 (a_2 (a_3 (a_4 (a_5 a_6))))),
\end{multline}
\item eight elements of arity $7$
\begin{multline}
(a_1 ((a_2 a_3) ((a_4 (a_5 a_6)) a_7)))  \mapsto  (a_1 (a_2 ((a_3 a_4) ((a_5 a_6) a_7))))  -  (a_1 (a_2 (a_3 ((a_4 a_5) (a_6 a_7)))))\\  +  (a_1 (a_2 (a_3 (a_4 ((a_5 a_6) a_7)))))  -  (a_1 (a_2 (a_3 (a_4 (a_5 (a_6 a_7)))))),
\end{multline}
\begin{multline}
((a_1 a_2) (a_3 ((a_4 a_5) (a_6 a_7))))  \mapsto  (a_1 (a_2 ((a_3 a_4) ((a_5 a_6) a_7))))  -  (a_1 (a_2 (a_3 ((a_4 a_5) (a_6 a_7)))))\\  +  (a_1 (a_2 (a_3 (a_4 ((a_5 a_6) a_7)))))  - (a_1 (a_2 (a_3 (a_4 (a_5 (a_6 a_7)))))),
\end{multline}
\begin{multline}
((a_1 (a_2 a_3)) (a_4 ((a_5 a_6) a_7)))  \mapsto  - ((a_1 (a_2 a_3)) (a_4 (a_5 (a_6 a_7))))  +  (a_1 (a_2 ((a_3 a_4) (a_5 (a_6 a_7))))) \\ +  (a_1 (a_2 (a_3 ((a_4 a_5) (a_6 a_7)))))  +  (a_1 (a_2 (a_3 (a_4 ((a_5 a_6) a_7)))))  +  (a_1 (a_2 (a_3 (a_4 (a_5 (a_6 a_7)))))),
\end{multline}
\begin{multline}
(a_1 ((a_2 ((a_3 a_4) a_5)) (a_6 a_7)))  \mapsto  (a_1 (a_2 ((a_3 (a_4 a_5)) (a_6 a_7))))  +  (a_1 (a_2 ((a_3 a_4) (a_5 (a_6 a_7)))))\\  +  2 (a_1 (a_2 (a_3 ((a_4 a_5) (a_6 a_7)))))  +  (a_1 (a_2 (a_3 (a_4 ((a_5 a_6) a_7)))))  +  2 (a_1 (a_2 (a_3 (a_4 (a_5 (a_6 a_7)))))),
\end{multline}
\begin{multline}
((a_1 a_2) (a_3 (a_4 (a_5 (a_6 a_7)))))  \mapsto  - (a_1 ((a_2 a_3) (a_4 (a_5 (a_6 a_7)))))  -  (a_1 (a_2 ((a_3 a_4) (a_5 (a_6 a_7)))))\\  -  (a_1 (a_2 (a_3 ((a_4 a_5) (a_6 a_7)))))  -  (a_1 (a_2 (a_3 (a_4 ((a_5 a_6) a_7)))))  -  2 (a_1 (a_2 (a_3 (a_4 (a_5 (a_6 a_7)))))),
\end{multline}
\begin{multline}
((a_1 a_2) ((a_3 ((a_4 a_5) a_6)) a_7))  \mapsto  (a_1 (a_2 ((a_3 a_4) ((a_5 a_6) a_7))))  -  (a_1 (a_2 (a_3 ((a_4 (a_5 a_6)) a_7))))\\  -  (a_1 (a_2 (a_3 ((a_4 a_5) (a_6 a_7)))))  -  (a_1 (a_2 (a_3 (a_4 ((a_5 a_6) a_7)))))  -  2 (a_1 (a_2 (a_3 (a_4 (a_5 (a_6 a_7)))))),
\end{multline}
\begin{multline}
(a_1 ((a_2 a_3) (a_4 ((a_5 a_6) a_7))))  \mapsto  - (a_1 ((a_2 a_3) (a_4 (a_5 (a_6 a_7)))))  +  (a_1 (a_2 ((a_3 a_4) ((a_5 a_6) a_7))))\\  -  (a_1 (a_2 ((a_3 a_4) ((a_5 a_6) a_7))))  -  2 (a_1 (a_2 (a_3 ((a_4 a_5) (a_6 a_7)))))  -  (a_1 (a_2 (a_3 (a_4 ((a_5 a_6) a_7)))))\\  -  3 (a_1 (a_2 (a_3 (a_4 (a_5 (a_6 a_7)))))),
\end{multline}
\begin{multline}
((a_1 a_2) (a_3 ((a_4 (a_5 a_6)) a_7)))  \mapsto  - ((a_1 a_2) (a_3 (a_4 ((a_5 a_6) a_7))))  +  (a_1 ((a_2 a_3) (a_4 (a_5 (a_6 a_7)))))\\  -  (a_1 (a_2 ((a_3 a_4) ((a_5 a_6) a_7))))  +  (a_1 (a_2 ((a_3 a_4) (a_5 (a_6 a_7)))))  -  (a_1 (a_2 (a_3 ((a_4 (a_5 a_6)) a_7)))) \\ +  2 (a_1 (a_2 (a_3 ((a_4 a_5) (a_6 a_7)))))  +  3 (a_1 (a_2 (a_3 (a_4 (a_5 (a_6 a_7)))))),
\end{multline}
\item and eleven elements of arity $8$
\begin{gather}
  (a_1 (a_2 (a_3 (a_4 (a_5 ((a_6 a_7) a_8)))))) \mapsto0,\\
  (a_1 (a_2 (a_3 (a_4 ((a_5 (a_6 a_7)) a_8))))) \mapsto0,\\
  (a_1 (a_2 (a_3 ((a_4 a_5) ((a_6 a_7) a_8))))) \mapsto0,\\
  (a_1 (a_2 ((a_3 a_4) (a_5 (a_6 (a_7 a_8)))))) \mapsto0,\\
  (a_1 (a_2 ((a_3 (a_4 a_5)) (a_6 (a_7 a_8))))) \mapsto0,\\
  (a_1 ((a_2 a_3) (a_4 (a_5 ((a_6 a_7) a_8))))) \mapsto0,\\
  (a_1 ((a_2 (a_3 a_4)) (a_5 (a_6 (a_7 a_8))))) \mapsto0,\\
  (a_1 (a_2 (a_3 ((a_4 (a_5 a_6)) (a_7 a_8))))) \mapsto0,\\
  (a_1 (a_2 (a_3 ((a_4 a_5) (a_6 (a_7 a_8)))))) \mapsto0,\\
  (a_1 (a_2 (a_3 (a_4 ((a_5 a_6) (a_7 a_8)))))) \mapsto0,\\
  (a_1 (a_2 (a_3 (a_4 (a_5 (a_6 (a_7 a_8))))))) \mapsto0.\label{eq:lnil}
\end{gather}
\end{itemize}
\end{lemma}

Using this Gr\"obner basis, we may now prove both claims. 

\smallskip

1. The rewriting rule \eqref{eq:lnil} reads $l_{a_1}l_{a_2}l_{a_3}l_{a_4}l_{a_5}l_{a_6}l_{a_7}(a_8)=0$, so the algebra $L(A)$ is nilpotent of index $7$. Of course, an algebra is weakly nilpotent of some index $d$ if and only if the opposite algebra is weakly nilpotent of index $d$, so the same holds for right multiplications. At the same time, the element 
 \[
(a_1 (a_2 (a_3 (a_4 (a_5 (a_6 a_7))))))
 \] 
is not divisible by any left hand side of rewriting rules, so it does not vanish modulo $w_4=0$.  

\smallskip

2. Let us define the ``zig-zag'' monomials $m_i$ recursively by putting 
\begin{gather}
m_1=a_1,\\
m_{2k}(a_1,\ldots,a_{2k})=m_{2k-1}(a_1,\ldots,a_{2k-1})a_{2k},\\
m_{2k+1}(a_1,\ldots,a_{2k+1})=a_1m_{2k}(a_2,\ldots,a_{2k}).
\end{gather}
By a direct inspection, these monomials are not divisible by any left hand side of rewriting rules, so they do not vanish modulo $w_4=0$, and thus the multiplicative universal enveloping algebra $U(A)$ is, in general, not nilpotent; moreover, this shows that its subalgebra generated by elements $l_{a}r_{b}$ is not nilpotent.  
\end{proof}

The following conjecture is ``reasonable'' in that it was checked on a computer once (though the computation took a few days, and no independent verification followed). 

\begin{conjecture}
For every algebra $A$ satisfying the identity $w_5=0$, the associative algebras $L(A)$ and $R(A)$ are nilpotent; in both cases, the nilpotence is of index $15$ (and not less, in general).
\end{conjecture}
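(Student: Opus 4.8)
The plan is to follow verbatim the strategy used for Theorem~\ref{th:BNil}, replacing the nonsymmetric operad encoding $w_4=0$ by the nonsymmetric operad $\calP$ encoding the single identity $w_5=0$. First, the two claims are symmetric: evaluating $w_n$ in the opposite algebra $A^{\mathrm{op}}$ and then relabelling the arguments in reverse order recovers $w_n$ evaluated in $A$, so $A$ satisfies $w_5=0$ if and only if $A^{\mathrm{op}}$ does, and since $L(A^{\mathrm{op}})=R(A)$ the statement for $R(A)$ follows formally from the one for $L(A)$. It therefore suffices to analyse $L(A)$.

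The next step is to compute the reduced Gr\"obner basis of $\calP$ for the graded path-lexicographic order of the free operad, exactly as in the lemma preceding the proof of Theorem~\ref{th:BNil}. Granting that this basis is finite, both quantitative assertions are then purely mechanical. For the upper bound one identifies in the basis the rewriting rule whose left hand side is the right comb
\[
(a_1 (a_2 (\cdots (a_{15} a_{16})\cdots)))
\]
of arity $16$ and whose right hand side is $0$; this is the exact analogue of rule~\eqref{eq:lnil}, and it asserts that $l_{a_1} l_{a_2} \cdots l_{a_{15}}$ kills every element, so that $L(A)$ is nilpotent of index at most $15$. For the lower bound one checks that the right comb
\[
(a_1 (a_2 (\cdots (a_{14} a_{15})\cdots)))
\]
of arity $15$, which represents $l_{a_1} l_{a_2} \cdots l_{a_{14}}(a_{15})$, is divisible by no leading monomial of the Gr\"obner basis; being a normal form, it is nonzero modulo $w_5=0$, so the index is exactly $15$.

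The real difficulty, and the only genuinely nonmechanical step, is to prove that the Gr\"obner basis of $\calP$ is finite and to compute it in full in a verifiable way. There is no a priori reason for the completion procedure to terminate: the single defining relation lives in arity $5$, and resolving overlaps can in principle create leading terms of ever larger arity. In the $w_4=0$ case the basis happened to close up at arity $8$; here one expects it to close up around arity $16$, with a considerably larger number of relations, which matches the reported run of several days. Turning the conjecture into a theorem thus rests either on completing this computation with a provably terminating and auditable implementation, or on an independent recomputation in the spirit of the \texttt{Maple} verification of Theorem~\ref{th:BNil}.

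Finally, I would record the suggestive coincidence that the indices $7$ and $15$ obtained from $w_4=0$ and, conjecturally, $w_5=0$ are exactly $2^{3}-1$ and $2^{4}-1$, so that the vanishing right comb occurs at arity $2^{n-1}$ for $w_n=0$. This hints that $w_n=0$ might force $L(A)$ to be nilpotent of index $2^{n-1}-1$, and that a structural, doubling-type argument explaining why the right comb of arity $2^{n-1}$ reduces to zero could both prove the present conjecture and free it from reliance on a single large computation.
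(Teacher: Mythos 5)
This statement is a \emph{conjecture} in the paper: the author supplies no proof, only the remark that it ``was checked on a computer once (though the computation took a few days, and no independent verification followed).'' Your plan is exactly the method the paper uses for the analogous Theorem~\ref{th:BNil} and evidently the method behind the author's own unverified computation: reduce to $L(A)$ via the opposite-algebra symmetry (which is sound, since the set of parenthesizations is closed under mirror image, so $A$ satisfies $w_5=0$ iff $A^{\mathrm{op}}$ does and $L(A^{\mathrm{op}})=R(A)$); compute the reduced Gr\"obner basis of the nonsymmetric operad for $w_5=0$ in the graded path-lexicographic order; read off the upper bound from a rule sending the arity-$16$ right comb to $0$, in exact analogy with rule~\eqref{eq:lnil}; and get the lower bound from the arity-$15$ right comb being a normal form, hence nonzero in the free algebra of the variety.

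The gap you flag is the real one, and it is the same gap that keeps the statement a conjecture in the paper: nothing here establishes that the completion procedure terminates, and the decisive computation is neither carried out nor auditable, so the two ``mechanical'' steps have no object to be applied to. Your reduction is correct and your honesty about its status is appropriate, but as written this is a proof strategy, not a proof --- which matches the paper, where the corresponding finiteness claim for general $w_d=0$ is itself only conjectured. Your closing observation that the indices $7$ and $15$ fit the pattern $2^{d-1}-1$ is not new to the paper either: it is precisely the content of the Question following this conjecture, so a structural ``doubling'' argument of the kind you suggest would indeed be the way to settle both at once.
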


Guided by some general heuristics on operads, we also make the following conjecture.

\begin{conjecture}
For each $d\in\mathbb{N}$, the reduced Gr\"obner basis for the nonsymmetric operad encoding the identity $w_d=0$ is finite.
\end{conjecture}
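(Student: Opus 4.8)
The plan is to recast the statement as a finite generation property for a monomial operad. Recall that, for a fixed monomial order, the elements of the reduced Gr\"obner basis of an operadic ideal are in bijection with the minimal generators of its leading-term ideal: each reduced element is normalised so as to carry a distinct leading monomial, and these leading monomials are exactly the minimal generators of the leading-term ideal as a monomial tree-ideal. Thus the conjecture is equivalent to the assertion that, for each $d$, the leading-term ideal $\In_\prec(I_d)$ of the ideal $I_d$ generated by $w_d$ in the free nonsymmetric operad $\calF$ is finitely generated. First I would pin down the leading monomial of the single generator: for the graded path-lexicographic order it is the left comb $\ell_d=(\cdots((a_1a_2)a_3)\cdots a_d)$, as is already visible for $d=4$ in the arity-$4$ rule above. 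The corresponding rewriting rule expresses $\ell_d$ through trees of strictly smaller left-depth, so every normal form contains no chain of $d-1$ internal vertices each of which is the left child of the previous one; equivalently, its left-depth is bounded in terms of $d$. This is the first and cheapest structural constraint.

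Next I would try to exhibit the whole leading-term ideal explicitly by a stabilisation argument for Buchberger's process. The goal is to produce an \emph{explicit finite} candidate set of rewriting rules, of arity bounded by some $f(d)$ (for $d=4$ one has $f(4)=8$), and then to prove that this set is confluent and terminating; by the operadic Diamond Lemma this would simultaneously exhibit it as a complete Gr\"obner basis and show that it is finite. Concretely, I would analyse the critical pairs arising from overlaps of the already-found leading monomials and prove that beyond arity $f(d)$ every such S-polynomial reduces to zero modulo the rules of smaller arity. The combinatorics of right-leaning trees of bounded left-depth is the main tool here: the bound on left-depth should force the overlaps between leading monomials into a bounded-arity ``window'', so that no genuinely new obstruction can be created once the arity is large enough. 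The zig-zag monomials appearing in the proof of Theorem~\ref{th:BNil}, which are normal forms in every arity, are a useful guide to what the stable family of normal forms ought to be in general.

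As an independent consistency check, and as a way to produce the candidate $f(d)$ together with the candidate normal-form set, I would compute the Hilbert series of the conjectural quotient $\calP_d=\calF/I_d$ and verify that it is a rational function of the shape forced by a finite rewriting system, cross-checking against the symmetrisation map to the commutative operad governed by $t_d$ described in the introduction.

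The hard part will be the uniformity in $d$. Nonsymmetric operads are not Noetherian---monomial tree-ideals need not be finitely generated---so finiteness cannot follow from any general principle and must be wrung out of the specific combinatorial structure of $w_d$. Even for a single fixed $d$ the Gr\"obner basis is already sizeable and spread across several arities (one element in arity $4$, and then elements up to arity $8$ when $d=4$), and the delicate point is to establish a clean \emph{termination bound} for Buchberger's algorithm: that after finitely many arities no new leading monomials appear. I expect that controlling the leading terms of the higher S-polynomials---showing that each either reduces to zero or is divisible by a previously found leading monomial---will require a careful, likely order-theoretic induction on the shape of right-leaning trees, and it is here that most of the work lies.
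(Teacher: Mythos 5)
This statement is one of the paper's \emph{conjectures}: the author offers no proof at all, only the heuristic phrase ``guided by some general heuristics on operads'' and computational evidence for small $d$ (the explicit finite reduced Gr\"obner basis for $d=4$ displayed in the proof of Theorem \ref{th:BNil}, and a single unverified machine computation related to $d=5$). So there is no paper proof to measure you against; the question is whether your text closes the open problem, and it does not. What you have written is a research plan, not a proof, and you say so yourself: the entire mathematical content of the conjecture is concentrated in the step you defer --- producing, for general $d$, an explicit candidate set of rewriting rules and proving that every S-polynomial beyond some arity $f(d)$ reduces to zero. Without that stabilisation argument, nothing is established. The preliminary observations you do make (leading monomials biject with minimal generators of the leading-term ideal; the leading term of $w_d$ in the graded path-lexicographic order is the left comb, so normal forms have bounded left-depth) are correct but cheap, and, as you correctly note, they cannot suffice on general grounds, since nonsymmetric operads are not Noetherian and monomial tree-ideals with bounded left-depth obstructions can still require infinitely many generators once combined with non-monomial relations. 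The $d=4$ data in the paper already shows how delicate this is: the basis has elements in arities $4,5,6,7,8$ with no evident pattern that extrapolates to general $d$, and indeed the paper's companion conjecture (nilpotence index $15$ for $d=5$, versus $7$ for $d=4$) suggests the arity window $f(d)$ grows fast, so any ``bounded window for overlaps'' argument must confront that growth rather than assume it away.

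Two smaller technical points. First, your proposed consistency check via Hilbert series is miscalibrated: for nonsymmetric operads a finite convergent rewriting system forces an \emph{algebraic} generating series, not a rational one (already the free operad on one binary operation has the Catalan series $\frac{1-\sqrt{1-4t}}{2t}$), and in any case algebraicity is only a necessary condition, so this check can falsify but never confirm finiteness. Second, the zig-zag monomials from Theorem \ref{th:BNil} are normal forms for the specific $d=4$ basis; using them as a guide for general $d$ presupposes knowledge of the normal-form set that is exactly what the conjecture asks you to determine. In short: your framing of the problem is sound and matches how one would attack it operadically, but the proposal contains no argument where the difficulty lives, so the conjecture remains as open after your text as before it.
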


It is also natural to raise the following question.

\begin{question}
Let $d\in\mathbb{N}$. Is it true that for every algebra $A$ satisfying the identity $w_d=0$, the associative algebras $L(A)$ and $R(A)$ are nilpotent? If yes, is it true that the nilpotence is of index $2^{d-1}-1$ (and not less, in general)?
\end{question}

\subsection{Commutative case}

Throughout this section, we assume that the algebra $A$ is commutative. As explained in Introduction, we shall consider, for each $n$, the multilinear operation $t_n(a_1,\ldots,a_n)$ defined as the sum of all distinct multilinear terms of degree $n$ that one can define using the product in $A$ (sum of all \emph{shuffle} binary trees, see \cite{MR3642294}). We say that an algebra $A$ is \emph{weakly nil of index $n$} if the identity $t_d=0$ holds in $A$ for all $d\ge n$. 

Let us start with proving the result alluded to in the introduction that relates the weak nil property to the weak nilpotence.

\begin{proposition}
Suppose that $A$ is a nonassociative algebra that satisfies the identity $w_d=0$. Then the symmetrized product $a_1\circ a_2=a_1a_2+a_2a_1$ satisfies the identity $t_d=0$. 
\end{proposition}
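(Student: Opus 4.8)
The plan is to show that the total symmetrization of the operation $w_d$ is a nonzero scalar multiple of $t_d$ expressed in the symmetrized product $\circ$, so that the vanishing of $w_d$ forces the vanishing of $t_d$. The key conceptual point is that $w_d$ is a sum over \emph{planar} (ordered) binary trees with leaves labelled $1,\ldots,d$ in the natural left-to-right order, whereas $t_d$ is a sum over \emph{abstract} (nonplanar) binary trees, i.e.\ over shuffle trees, whose underlying product is commutative. I would first fix precise bookkeeping: let $\mathrm{Sym}$ denote the operation obtained from $w_d$ by averaging over all permutations of the inputs, $\mathrm{Sym}(w_d)(a_1,\ldots,a_d)=\sum_{\sigma\in S_d} w_d(a_{\sigma(1)},\ldots,a_{\sigma(d)})$. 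Since the identity $w_d=0$ holds in $A$, every such permuted instance vanishes, and hence $\mathrm{Sym}(w_d)=0$ as well.

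Next I would carry out the combinatorial identification. Each term of $w_d(a_1,\ldots,a_d)$ is a planar binary tree with leaves read in order; summing over all input permutations and then reading the result in the symmetrized product $\circ$ amounts to forgetting the planar structure: a planar tree $T$ together with a leaf-labelling that reads $a_{\sigma(1)},\ldots,a_{\sigma(d)}$ in order contributes, after expanding each internal product $xy$ as a piece of $x\circ y=xy+yx$, exactly to the $\circ$-monomial indexed by the underlying abstract tree with the corresponding labelling. The crucial counting step is that for a fixed abstract (commutative) binary tree shape with a fixed assignment of the variables $a_1,\ldots,a_d$ to its leaves, the number of ways it arises from the double summation (over planar trees and over permutations) is a constant $c_d$ independent of the tree and the labelling, by the obvious symmetry of the construction. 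I would verify this by tracking, for a given commutative monomial, how many $(T,\sigma)$ pairs collapse onto it under the planar-to-abstract quotient combined with the noncommutative-to-commutative substitution $xy,yx\mapsto x\circ y$.

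Assembling these pieces gives $\mathrm{Sym}(w_d)(a_1,\ldots,a_d)=c_d\, t_d(a_1,\ldots,a_d)$ computed in the $\circ$-product, with $c_d$ a positive integer. Because the characteristic is zero, $c_d$ is invertible, so $\mathrm{Sym}(w_d)=0$ yields $t_d=0$ for $\circ$, which is exactly the claim. The main obstacle I anticipate is rigorously justifying that the collapsing multiplicity $c_d$ is genuinely constant across all abstract trees and labellings: one must rule out the possibility that some symmetric tree shapes (with nontrivial automorphisms, e.g.\ a balanced tree whose two subtrees are isomorphic) are hit a different number of times than generic shapes. The clean way around this is to argue with labelled leaves throughout—since the variables $a_1,\ldots,a_d$ are distinct, every abstract \emph{labelled} binary tree has trivial automorphism group, so the multiplicity count reduces to the uniform statement that expanding a planar product into the commutative one is a bijection-with-constant-fibres between the planar terms of $\mathrm{Sym}(w_d)$ and the commutative terms of $t_d$. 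I would therefore phrase the entire argument at the level of labelled trees, where the symmetry issue evaporates, and only at the very end read off the operadic identity.
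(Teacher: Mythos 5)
Your proposal is correct and follows essentially the same route as the paper, whose proof is precisely to add all permutations of the identity $w_d=0$ and collect terms into a scalar multiple of $t_d=0$ for the product $\circ$; your labelled-tree bookkeeping simply makes the "collect terms" step explicit (with distinct labels each abstract tree has exactly $2^{d-1}$ planar representatives, so the fibres are indeed constant and the scalar is nonzero in characteristic zero). No gap.
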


\begin{proof}
This is a direct algebraic manipulation: one should add all permutations of the identity $w_d=0$ and collect terms together; the result is a scalar multiple of the identity $t_d=0$ for the symmetrized product. 
\end{proof}

Thus, we obtain a functor $A\mapsto A^{(+)}$ that relates the two types of algebras. For the identity $w_3=0$, this is a well known result that seem to go back to \cite{MR1489904}: in an algebra satisfying the anti-associativity identity $(a_1a_2)a_3+a_1(a_2a_3)=0$, the identity $t_3=0$ (known as the mock-Lie identity \cite{MR1358617}, or the Lie-Jordan identity \cite{MR3247244}) holds for the symmetrized product. 
For $d=3$, our functor behaves rather pathologically: anti-associative algebras are nilpotent of index $4$, and mock-Lie algebras are not, so we cannot hope to represent the identity $t_3=0$ faithfully. The reader is invited to consult \cite{MR3598575} for a further discussion of representability. Nevertheless, we feel that this relationship between the two types of nilpotence might be useful. Of course, it would be important to understand the behaviour of our functor for $d>3$.

\begin{question}
Let $A$ be the free algebra for the identity $w_d=0$ generated by $x_1,\ldots,x_p$ for some $p\in\mathbb{N}$. For which $d>3$ is the subalgebra of $A$ generated by $x_1,\ldots,x_p$ under the symmetrized product $a_1\circ a_2=a_1a_2+a_2a_1$ free for the identity $t_d=0$?
\end{question}

As above, one of the first natural questions to ask here is to determine a minimal finite system of identities defining weakly nil algebras of index $n$. 

\begin{proposition}\leavevmode
\begin{itemize}
\item For each $n$, the identities $t_n=t_{n+1}=\cdots=t_{2n-2}=0$ imply the weak nil property of index $n$.
\item The identity $t_3=0$ implies the weak nil property of index $3$.
\end{itemize}
\end{proposition}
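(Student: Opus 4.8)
The plan is to mirror the argument already used for the operations $w_n$, the only genuinely new ingredient being an inductive description of $t_n$. Classifying the binary trees that contribute to $t_n$ according to the unordered two-block partition of $\{a_1,\dots,a_n\}$ cut out by the topmost product, and using commutativity to forget which factor is the left one, I would first record the identity
\[
t_n(a_1,\dots,a_n)=\sum_{\substack{S\subsetneq\{1,\dots,n\}\\ 1\in S}} t_{|S|}(a_S)\,t_{n-|S|}(a_{S^{c}}),
\]
where $a_S$ denotes the variables indexed by $S$ in their natural order and the constraint $1\in S$ selects each partition exactly once. (For $n=3$ this returns $a_1(a_2a_3)+(a_1a_2)a_3+(a_1a_3)a_2$, as it should.)

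For the first bullet I would argue by strong induction on $d\ge n$. The cases $n\le d\le 2n-2$ are exactly the hypotheses. For $d\ge 2n-1$, in any two-block partition of a $d$-element set at least one block has size $\ge n$, for otherwise both blocks would have size $\le n-1$ and we would get $d\le 2n-2$. Hence each summand in the displayed formula for $t_d$ carries a factor $t_m$ with $n\le m\le d-1$, which vanishes by the induction hypothesis, so $t_d=0$. I expect no difficulty in this part.

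The substance is in the second bullet. By the first bullet with $n=3$ it suffices to show that $t_3=0$ forces $t_4=0$. In the inductive formula for $t_4$ the summands coming from the one-element block $\{1\}$ and from the three-element blocks each contain a factor $t_3$ and hence vanish, leaving only
\[
P:=(a_1a_2)(a_3a_4)+(a_1a_3)(a_2a_4)+(a_1a_4)(a_2a_3),
\]
so everything reduces to proving $P=0$ in a mock-Lie algebra. I would obtain this from two families of degree-$4$ consequences of $t_3=0$: the four ``outer multiplication'' identities $t_3(a_i,a_j,a_k)\,a_l=0$, and the six ``substitution'' identities $t_3(a_ia_j,a_k,a_l)=0$. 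Writing each multilinear degree-$4$ monomial either as a balanced tree $(a_xa_y)(a_za_w)$ or as a caterpillar put in the normal form $((a_xa_y)a_z)a_w$ via commutativity, a direct expansion shows that summing the four outer-multiplication identities produces exactly the sum of all twelve caterpillars (so that sum is zero), while summing the six substitution identities produces the same twelve caterpillars together with $2P$. Combining the two gives $2P=0$, hence $P=0$ since $\kk$ has characteristic zero.

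The only place demanding care is the bookkeeping in this last step: one must check that across the two families every caterpillar monomial occurs with the correct multiplicity, so that the caterpillars cancel and precisely $2P$ survives. This is a finite, purely combinatorial verification (entirely consistent with the Gr\"obner-basis computations used elsewhere in the paper), so I anticipate the main effort to be careful enumeration rather than any conceptual obstacle.
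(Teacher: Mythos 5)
Your proof is correct and takes essentially the same route as the paper: the same inductive formula $t_n=\sum_{S\ni 1} t_{|S|}(a_S)t_{n-|S|}(a_{S^c})$ yields the first bullet exactly as in the noncommutative case, and the second bullet is reduced to showing that $t_3=0$ implies $t_4=0$. The paper disposes of that last step with the words ``checked by a direct computation''; your explicit derivation --- reducing $t_4$ to $P=(a_1a_2)(a_3a_4)+(a_1a_3)(a_2a_4)+(a_1a_4)(a_2a_3)$ and obtaining $2P=0$ by comparing the sum of the four outer-multiplication consequences $t_3(a_i,a_j,a_k)a_l=0$ with the sum of the six substitution consequences $t_3(a_ia_j,a_k,a_l)=0$ --- is a correct hand verification of exactly that computation, with the caterpillar bookkeeping checking out.
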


\begin{proof}
The first claim is analogous to the one in the noncommutative case, since the elements $t_n$ also admit an inductive description: we have $t_1(a_1)=a_1$ and
 \[
t_n(a_1,\ldots,a_n)=\sum_{i=1}^{n-1}\sum_{\substack{I\sqcup J=\{1,\ldots,n\},\\ |I|=i, |J|=n-i, 1\in I}}t_i(a_I)t_{n-i}(a_J).
 \]
The second claim only requires to check that $t_4=0$ follows from $t_3=0$, which is checked by a direct computation. 
\end{proof}

The following conjecture is inspired by what we know in the noncommutative case. 
\begin{conjecture}\leavevmode
\begin{enumerate}
\item The identity $t_4=0$ does not imply $t_5=0$, and thus does not imply the index $4$ weak nil property. 
\item The identities $t_4=t_5=0$ imply the weak nil property of index $4$. 
\item For each $n$, the identities $t_n=t_{n+1}=\cdots=t_{2n-3}=0$ imply the identity $t_{2n-2}=0$, and hence imply weak nilpotence of index $n$.
\end{enumerate}
\end{conjecture}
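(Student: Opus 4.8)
The plan is to treat the three statements in increasing order of difficulty, recycling as much of the noncommutative machinery as possible. For part (1), I would compute, exactly as for the identity $w_4=0$, the reduced shuffle Gr\"obner basis of the symmetric operad governing the single identity $t_4=0$ (in the graded path-lexicographic order, using \cite{OpGb}), and separately the one governing $t_4=t_5=0$. If the two differ, the systems are inequivalent; concretely, I would exhibit a normal-form monomial of arity $5$ occurring in $t_5$ that is irreducible modulo the basis for $t_4=0$, witnessing $t_5\neq 0$ in the free algebra for $t_4=0$. For part (2), I would invoke the commutative counterpart of the preceding Proposition: since $t_4=t_5=t_6=0$ already imply the weak nil property of index $4$, it suffices to show that $t_6=0$ is a consequence of $t_4=t_5=0$. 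This is precisely the $n=4$ instance of part (3), and it can be settled by the reduction algorithm described above, feeding $t_6$ into the Gr\"obner basis for $t_4=t_5=0$ and checking that it rewrites to zero.

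The heart of the matter is the uniform statement (3). Here I would first perform the same reduction that powers the preceding Proposition. Expanding
\[
t_{2n-2}(a_1,\ldots,a_{2n-2})=\sum_{i=1}^{2n-3}\ \sum_{\substack{I\sqcup J=\{1,\ldots,2n-2\},\\ |I|=i,\ 1\in I}} t_i(a_I)\,t_{2n-2-i}(a_J),
\]
every summand with $i\geq n$ contains a factor $t_i=0$, and every summand with $i\leq n-2$ contains a factor $t_{2n-2-i}=0$, since in both cases the relevant index lies in the range $[n,2n-3]$. Hence, modulo the hypotheses,
\[
t_{2n-2}\equiv \sum_{\substack{I\sqcup J=\{1,\ldots,2n-2\},\\ |I|=|J|=n-1,\ 1\in I}} t_{n-1}(a_I)\,t_{n-1}(a_J),
\]
so (3) is equivalent to the vanishing, modulo $t_n=\cdots=t_{2n-3}=0$, of this single ``balanced square'' built from copies of $t_{n-1}$. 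Note that $t_{n-1}$ itself is \emph{not} assumed to vanish, so this is a genuine identity among products rather than a triviality.

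To kill the balanced square I would search for explicit operadic multipliers expressing it as a combination of the relations $t_n,\ldots,t_{2n-3}$. A natural source is substitution: since $n\leq 2n-3$ for $n\geq 3$, substituting the operation $t_{n-1}$ into one input of $t_n$ and single variables into the remaining $n-1$ inputs yields an element of arity exactly $2n-2$ that vanishes; expanding such composites and summing over the chosen input and over the partitions of the variables should produce the balanced square together with terms that are manifestly multiples of some $t_j=0$. The scalar shadow of this bookkeeping is the functional equation $Y=x+\tfrac12 Y^2$, that is $Y=1-\sqrt{1-2x}$, satisfied by the exponential generating series $\sum_n \frac{(2n-3)!!}{n!}x^n$ of the $t_n$; the balanced square is the diagonal contribution to $\tfrac12 Y^2$, and one would like to upgrade the evident scalar cancellations to the species/operad level with explicit witnesses.

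The main obstacle is precisely this last step. A shuffle Gr\"obner basis computation produces the required multipliers for each fixed $n$ (and in particular settles parts (1) and (2)), but passing from finitely many such verifications to a closed-form family of multipliers valid for all $n$ requires a genuinely combinatorial argument for the vanishing of the balanced square. I do not expect the symmetrization functor $A\mapsto A^{(+)}$ to shortcut this, since it transports noncommutative identities to commutative ones and not the other way around: it can supply corroborating examples and, once the noncommutative analogue is established, strong heuristic support, but not a reduction. Establishing the balanced-square identity uniformly in $n$ is where the real work lies.
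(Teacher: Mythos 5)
This statement is a \emph{conjecture} in the paper, not a theorem: the paper offers no proof of it at all, only the analogy with the noncommutative case (where the facts that $w_4=0$ does not imply $w_5=0$, and that $w_4=w_5=0$ implies weak nilpotence of index $4$, were established by explicit nonsymmetric Gr\"obner basis computations). So there is no proof to compare against; what can be assessed is whether your argument would settle the conjecture, and it does not. Your parts (1) and (2) are plans for computations rather than computations: you never exhibit the irreducible arity-$5$ element witnessing $t_5\neq 0$ modulo $t_4=0$, nor the actual reduction of $t_6$ to zero modulo $t_4=t_5=0$. Note also that these computations live in the world of shuffle (not nonsymmetric) operads, where reduced Gr\"obner bases need not be finite --- the paper only \emph{conjectures} finiteness even in its nonsymmetric setting --- so part (1) should be phrased as a finite linear-algebra check in arity $5$ (is $t_5$ contained in the arity-$5$ component of the operadic ideal generated by $t_4$?), which is legitimate, rather than as a comparison of full reduced Gr\"obner bases whose computation is not guaranteed to terminate.

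Your treatment of part (3) does contain one correct and useful step: by the inductive formula for $t_{2n-2}$, every term $t_i(a_I)t_{2n-2-i}(a_J)$ with $i\neq n-1$ vanishes under the hypotheses, since either $i$ or $2n-2-i$ lies in $[n,2n-3]$; hence the conjecture is equivalent to the vanishing, modulo $t_n=\cdots=t_{2n-3}=0$, of the balanced square $\sum_{|I|=|J|=n-1,\,1\in I}t_{n-1}(a_I)\,t_{n-1}(a_J)$. This reformulation is sound and exactly parallels the argument the paper gives for the first bullet of its commutative Proposition. But the vanishing of the balanced square --- which is the entire content of the conjecture --- is left unproved: your proposed source of multipliers (substituting $t_{n-1}$ into inputs of $t_n$ and summing) is only a heuristic, and the generating-function identity $Y=x+\tfrac12 Y^2$ is a numerological consistency check that cannot by itself produce the required operadic cancellation, as you yourself acknowledge in the final paragraph. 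In short, the proposal is an honest and reasonable research program, consistent in spirit with how the paper attacks the analogous noncommutative statements, but none of the three parts is actually proved.
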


It is known (see, for example, \cite{MR1278792}) that the identity $t_3=0$ imply the $3$-Engel identity $(((xy)y)y)=0$. Guided by the noncommutative case, we make the following conjecture.

\begin{conjecture}
The identity $t_4=0$ implies the $7$-Engel identity. 
\end{conjecture}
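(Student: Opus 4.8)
The plan is to imitate the operadic proof of Theorem \ref{th:BNil}, but now in the symmetric (shuffle) world, and to exhibit the Engel element as a consequence of $t_4=0$ by an explicit Gr\"obner basis reduction. First I would reduce the problem to a multilinear one. Since $\kk$ has characteristic zero, the $7$-Engel identity $R_y^7(x)=(((((((xy)y)y)y)y)y)y)=0$, where $R_y$ denotes the multiplication operator $x\mapsto xy$, is equivalent to its full polarization
\[
E(x;y_1,\ldots,y_7)=\sum_{\sigma\in S_7}R_{y_{\sigma(1)}}R_{y_{\sigma(2)}}\cdots R_{y_{\sigma(7)}}(x),
\]
a multilinear element of arity $8$ in the free commutative nonassociative algebra. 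Proving the conjecture amounts to showing that $E$ lies in the operadic (verbal) ideal generated by the arity-$4$ relation $t_4=0$; equivalently, that $E$ reduces to zero modulo a Gr\"obner basis of the shuffle operad $\calP$ obtained from the free commutative nonassociative operad by imposing $t_4=0$, in the sense of \cite{MR3642294}.

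Second, I would run the \texttt{Haskell} program \cite{OpGb} to compute the reduced Gr\"obner basis of $\calP$ for a graded path-lexicographic order, exactly as in the two Lemmas above, but on \emph{shuffle} trees rather than planar ones. Once the basis is known up to arity $8$, the normal-form algorithm described after the first Lemma applies verbatim: I would rewrite each of the shuffle monomials occurring in $E$ and check that the irreducible remainder vanishes. The target index $7$ is not arbitrary: it is forced by the heuristic that the commutative Engel bound should match the nilpotence index of $L(A)$ and $R(A)$ in the noncommutative case, which Theorem \ref{th:BNil} computes to be exactly $7$.

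As an independent consistency check (and a possible partial route), I would also examine the functor $A\mapsto A^{(+)}$. In $A^{(+)}$ the multiplication operator by $y$ is $l_y+r_y$, so the $7$-Engel identity for $A^{(+)}$ reads $(l_y+r_y)^7=0$. Theorem \ref{th:BNil} already gives $l_y^7=r_y^7=0$, so in the binomial expansion only the mixed words in $\{l_y,r_y\}$ of length $7$ survive, and their collective vanishing can be tested directly against the $w_4$-Gr\"obner basis of Theorem \ref{th:BNil} (note that part 2 of that theorem forbids concluding termwise, so genuine cancellation must occur). This would establish the $7$-Engel identity for every algebra of the form $A^{(+)}$; upgrading it to all algebras satisfying $t_4=0$ is precisely the representability issue raised in the Question above, which is why this route is not by itself conclusive.

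The main obstacle is computational rather than conceptual. The multilinear component of the free commutative nonassociative operad in arity $8$ has dimension $(2\cdot 8-3)!!=13!!=135135$, so the Gr\"obner basis of $\calP$ in arities up to $8$ is large; one must both certify that the computation terminates with a complete basis in these arities and then carry out the reduction of $E$ inside a space of this size. As with the degree-$15$ conjecture for $w_5$ discussed above, the bottleneck is the sheer size of the shuffle operad together with the current absence of an independent verification; any structural input that tames it --- for instance a manageable description of the leading shuffle monomials, or a reduction of $E$ to a short left-combed pattern before symmetrizing --- would be the decisive step.
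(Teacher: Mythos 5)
Your proposal is not a proof: the statement you are addressing appears in the paper as a conjecture, with no proof given, and your text does not supply one either. Every mathematically substantive step in your plan --- computing the reduced Gr\"obner basis of the shuffle operad encoding $t_4=0$ up through arity $8$, and reducing the polarized Engel element $E$ to normal form --- is deferred to a computation that you have not carried out. The preliminary reductions you do perform (polarization of the $7$-Engel identity in characteristic zero, and the reformulation as membership of $E$ in the operadic ideal generated by $t_4$) are correct but standard; they are exactly the setup any such verification would begin with, and they leave the entire burden of the argument on the unexecuted Gr\"obner computation. Until that computation is run to completion --- in a shuffle-operad component of dimension $(2\cdot 8-3)!!=135135$ in arity $8$, substantially larger than the nonsymmetric computations behind Theorem \ref{th:BNil}, and with no a priori guarantee that the truncated basis is manageable --- nothing is established. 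Indeed, it is precisely the absence of such a verification that makes this statement a conjecture rather than a theorem in the paper.

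Your secondary route via the functor $A\mapsto A^{(+)}$ has the same status, and you correctly diagnose why it could not be conclusive even if completed: the vanishing of $(l_y+r_y)^7$ on algebras of the form $A^{(+)}$ is itself an unperformed computation (and cannot be settled termwise, since the mixed words in $l_y,r_y$ need not vanish individually --- part (2) of Theorem \ref{th:BNil} shows the subalgebra generated by the elements $l_ar_b$ is not nilpotent), and even a positive outcome would only cover algebras representable as $A^{(+)}$. The paper's discussion of the case $d=3$, where anti-associative algebras are nilpotent of index $4$ while mock-Lie algebras are not, shows that representability can fail badly, so this route is a consistency check, not a proof strategy. In short: your plan is the natural one and is consistent with the paper's methodology, but as written it contains no completed argument, and the conjecture remains open.
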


We remark that some other much more unexpected identities were verified in \cite{MR1278792} by computer-assisted methods; for example, it is shown that the identity $t_3=0$ implies the really surprising identity $((((a_1a_2)(a_3a_4))(a_5a_6))(a_7a_8))a_9=0$. This prompts the following question.

\begin{question}
Let $d\in\mathbb{N}$. Does the identity $t_d=0$ always imply that certain multilinear monomial vanishes?
\end{question}

Finally, the noncommutative case also makes us raise the following question.

\begin{question}
Let $d\in\mathbb{N}$. Is it true that every commutative algebra $A$ satisfying the identity $t_d=0$ satisfies the $m$-Engel identity for some $m$?
\end{question}

The converse of this question is stated as an open problem in \cite{MR3229356}; it would imply a strengthening of the (known case of) Jacobian Conjecture for quadratic maps.

To conclude this section, let us remark that it would also be interesting to apply our approach to the nilpotency theorems of \cite[Sec.~8]{MR4513786}.

\section{Ternary product}

In this section, $A$ is an algebra with a ternary product $A\otimes A\otimes A\to A$, $a_1,a_2,a_3\mapsto (a_1 a_2 a_3)$. 

\subsection{Noncommutative case}

As explained in Introduction, for each odd $n$, we may consider the operation $w_n^{(3)}(a_1,\ldots,a_n)$ as the sum of all distinct multilinear terms of degree $n$ that one can define using the ternary product in $A$ if putting the arguments $a_1,\ldots,a_n$ in the natural order. It can be equivalently defined inductively as follows:
\begin{gather}
w_1^{(3)}(a_1)=a_1,\\
w_n^{(3)}(a_1,\ldots,a_n)=\sum_{i+j+k=n}(w_i^{(3)}(a_1,\ldots,a_i) w_j(a_{i+1},\ldots,a_{i+j}) w_k^{(3)}(a_{i+j+1},\ldots,a_n)).
\end{gather}
We say that an algebra $A$ is \emph{weakly nilpotent of index $n$} if the identity $w_d^{(3)}(a_1,\ldots,a_d)$ holds in $A$ for all odd $d\ge n$. 

Our first question concerns a minimal set of identities ensuring weak nilpotence. As in the binary case, one can immediately see that identities $w_{2n-1}n=w_{2n+1}=\cdots=w_{6n-3}=0$ imply weak nilpotence of index $2n-1$. 

\begin{question}
For the given $n\in\mathbb{N}$, describe a minimal set of identities among $w_i^{(3)}=0$ that imply weak nilpotence of index $2n-1$.
\end{question}

The following result is an analogue of the ``unexpected nilpotence'' observed in the binary case.

\begin{theorem}\label{th:3Nil}
Let $A$ be a ternary algebra satisfying the identity $w_5^{(3)}=0$. 
\begin{enumerate}
\item The associative algebra $L(A)$ generated by left multiplications $l_{a,b}\colon x\mapsto (a b x)$ is nilpotent and the associative algebra $R(A)$ generated by right multiplications $r_{a,b}\colon x\mapsto (x a b)$ is nilpotent, in both cases the nilpotence is of index $4$ (and not less, in general).
\item In general, the multiplicative universal enveloping algebra $U(A)$ generated by both left and right multiplications is not nilpotent. 
\end{enumerate}
\end{theorem}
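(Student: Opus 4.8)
The plan is to follow the proof of Theorem~\ref{th:BNil} almost verbatim, replacing the binary Gr\"obner basis by its ternary analogue. First I would feed the single identity $w_5^{(3)}=0$, which expands (from the inductive definition) as
\[
w_5^{(3)}(a_1,\ldots,a_5)=((a_1a_2a_3)\,a_4\,a_5)+(a_1\,(a_2a_3a_4)\,a_5)+(a_1\,a_2\,(a_3a_4a_5)),
\]
to the \texttt{Haskell} program \cite{OpGb} and compute the reduced Gr\"obner basis of the corresponding nonsymmetric operad for the graded path-lexicographic order. As in the binary case, I expect this basis to be finite (though larger than the one appearing in Theorem~\ref{th:BNil}); once it is in hand, everything else is a structural inspection of its leading monomials and so requires no further computation.

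For part (1), I would single out the rewriting rule whose left-hand side is the fully left-nested ternary monomial of degree $9$,
\[
(a_1\,a_2\,(a_3\,a_4\,(a_5\,a_6\,(a_7\,a_8\,a_9)))),
\]
and check that its right-hand side is $0$. Since this monomial equals $l_{a_1,a_2}l_{a_3,a_4}l_{a_5,a_6}l_{a_7,a_8}(a_9)$, such a rule shows $L(A)^4=0$, i.e.\ nilpotence of index at most $4$. Sharpness would follow from checking that the degree-$7$ monomial $l_{a_1,a_2}l_{a_3,a_4}l_{a_5,a_6}(a_7)=(a_1\,a_2\,(a_3\,a_4\,(a_5\,a_6\,a_7)))$ is not divisible (in the operadic sense) by any leading monomial of the basis; it is then a normal form, hence nonzero modulo $w_5^{(3)}=0$, giving $L(A)^3\neq0$. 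The statement for $R(A)$ needs no extra computation: the opposite ternary algebra, with product $(a_1a_2a_3)\mapsto(a_3a_2a_1)$, again satisfies $w_5^{(3)}=0$ (the three terms above are permuted among themselves by this reversal), and under passing to it left and right multiplications are interchanged, so $R(A)\cong L(A^{\mathrm{op}})$ is nilpotent of index $4$ as well.

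For part (2), I would exhibit, exactly as in the proof of Theorem~\ref{th:BNil}, a family of ``zig-zag'' ternary monomials of unbounded degree, built by alternately applying a left multiplication $l_{a,b}\colon x\mapsto(abx)$, which nests $x$ into the third slot of a new bracket, and a right multiplication $r_{a,b}\colon x\mapsto(xab)$, which nests it into the first slot. Concretely these are the staircase-shaped monomials of degree $4k+1$ obtained by applying the composite $l_{a,b}r_{c,d}$ a total of $k$ times to a single generator. The crux is to verify that no member of this family is divisible by any leading monomial of the Gr\"obner basis; granting this, every such monomial is a nonzero normal form, so $U(A)$ is not nilpotent, and, since the whole family is built from the blocks $l_{a,b}r_{c,d}$, the same conclusion holds already for the subalgebra of $U(A)$ generated by these blocks.

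The main obstacle is twofold. On the computational side, the ternary operad carries far more data than the binary one, so the finiteness of the Gr\"obner basis — on which the entire argument rests — must be confirmed by running the computation to completion and, ideally, verified independently. On the conceptual side, the real work is in part (2): one must pin down the precise zig-zag shape whose staircase pattern avoids every leading monomial and then prove this avoidance by a uniform combinatorial argument (a pattern-matching statement about the admissible nestings) rather than by reducing each monomial separately. This avoidance step is the one most sensitive to the exact list of leading terms produced by the computation, and is where I expect the genuine difficulty to lie.
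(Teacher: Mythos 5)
Your treatment of part (1) is sound and is essentially the paper's argument: the expansion of $w_5^{(3)}$ into three terms is correct, the Gr\"obner basis is indeed finite (five rules, of arities $5$, $7$ and $9$), the arity-$9$ rule $(a_1 a_2 (a_3 a_4 (a_5 a_6 (a_7 a_8 a_9))))\mapsto 0$ gives $L(A)^4=0$, the left-nested degree-$7$ monomial is a normal form giving sharpness, and the opposite-algebra trick disposes of $R(A)$ exactly as in the binary case.

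The genuine gap is in part (2): your staircase monomials are not normal forms, and no staircase built from $l$'s and $r$'s can be. The arity-$5$ leading term is $((a_1 a_2 a_3) a_4 a_5)$, i.e.\ ``some vertex has an internal first child''. Hence the moment a right multiplication $r_{c,d}$ is applied to anything that is already a product, the result is divisible by this leading term: already for $k=2$ your monomial $(a_2\,b_2\,((a_1\,b_1\,(x\,c_1\,d_1))\,c_2\,d_2))$ is reducible, so the claimed avoidance fails at the first iteration. Worse, the strategy cannot be repaired by choosing a cleverer shape: a normal form must have every first slot occupied by a leaf, so in a normal-form word in $l$'s and $r$'s the $r$ can occur only innermost, applied directly to the generator; and the arity-$9$ rule above, whose right-hand side is zero, forbids third-slot chains of length four, so such a word has length at most three. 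Thus every monomial realized by a word of length $\geq 4$ in left and right multiplications is divisible by some leading term --- there is no unbounded family of normal-form words, unlike in the binary case. (Note also that divisibility by a rule with zero right-hand side kills the word outright: for instance $l_{a_3,b_3}l_{a_2,b_2}l_{a_1,b_1}r_{c,d}=0$ identically, so some mixed words genuinely vanish and the choice of word matters.) What part (2) actually requires is to track the rewriting rather than avoid it: rule 1 gives $r_{c,d}\bigl((a\,b\,M)\bigr) = -(a\,(b\,M\,c)\,d) - (a\,b\,(M\,c\,d))$, and the first term is a ``middle-nested'' normal form; an induction on $k$ then shows that the fully middle-nested staircase monomial occurs with coefficient $\pm 1$ in the normal form of the alternating word $(r_{c_k,d_k}l_{e_k,f_k})\cdots(r_{c_1,d_1}l_{e_1,f_1})(x)$, and since no other reduction branch produces a monomial of that shape, the word is nonzero and $U(A)$ is not nilpotent. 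This is the content hidden behind the paper's terse ``the claims follow in the same way as for the binary case''; your literal transplant of the binary zig-zag argument breaks precisely because in the ternary Gr\"obner basis the first slot is forbidden territory for normal forms.
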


\begin{proof}
The Gr\"obner basis in this case is again finite, and leads to the following rewriting rules:
\begin{itemize}
\item one rewriting rule of arity $5$: $$((a_1 a_2 a_3) a_4 a_5) \mapsto  - (a_1 (a_2 a_3 a_4) a_5)  -  (a_1 a_2 (a_3 a_4 a_5)),$$
\item one rewriting rule of arity $7$ : $$(a_1 (a_2 a_3 (a_4 a_5 a_6)) a_7) \mapsto  - (a_1 a_2 (a_3 (a_4 a_5 a_6) a_7))  -  (a_1 a_2 (a_3 a_4 (a_5 a_6 a_7))),$$
\item and three rewriting rules of arity $9$:
\begin{gather}
(a_1 a_2 (a_3 a_4 (a_5 a_6 (a_7 a_8 a_9))))\mapsto 0,\\
(a_1 a_2 (a_3 (a_4 a_5 a_6) (a_7 a_8 a_9)))\mapsto 0,\\
(a_1 (a_2 a_3 a_4) (a_5 (a_6 a_7 a_8) a_9))\mapsto  - (a_1 (a_2 a_3 a_4) (a_5 a_6 (a_7 a_8 a_9))).
\end{gather}
\end{itemize}
 
The claims follow in the same way as for the binary case.
\end{proof}

The following conjecture is a speculation based on the known result for the binary case. 

\begin{conjecture}
For every ternary algebra $A$ satisfying the identity $w_7^{(3)}=0$, the associative algebra $L(A)$ generated by left multiplications is nilpotent and the associative algebra $R(A)$ generated by right multiplications are nilpotent, in both cases the nilpotence is of index $13$ (and not less, in general).
\end{conjecture}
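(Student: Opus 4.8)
The plan is to follow the strategy used for Theorem~\ref{th:3Nil}, only one step higher: compute the reduced Gr\"obner basis for the nonsymmetric operad encoding the single defining relation $w_7^{(3)}=0$ (an element of arity $7$, a sum of $\frac{1}{7}\binom{9}{3}=12$ ternary-tree monomials) for the graded path-lexicographic order, and then read off the consequences. The conceptual dictionary is the same as before: a product of $m$ left multiplications $l_{a,b}\colon x\mapsto(a b x)$ is an operadic element of arity $2m+1$, namely the iterated monomial $(a_1 a_2 (a_3 a_4 (\cdots (a_{2m-1}a_{2m}a_{2m+1})\cdots)))$, and nilpotence of $L(A)$ of index $m$ is equivalent to the assertion that this arity-$(2m+1)$ monomial lies in the leading ideal (reduces to zero under the rewriting rules) while the arity-$(2m-1)$ monomial is a normal form. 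For the claimed index $13$ this amounts to exhibiting, among the Gr\"obner basis elements, a rewriting rule of arity $27$ sending $l_{a_1,a_2}\cdots l_{a_{25},a_{26}}(a_{27})$ to $0$, together with the verification that the arity-$25$ monomial $l_{a_1,a_2}\cdots l_{a_{23},a_{24}}(a_{25})$ is divisible by no leading monomial.

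Granting the computation, both claims then follow exactly as in Theorems~\ref{th:BNil} and~\ref{th:3Nil}. For the first claim, the arity-$27$ ``to zero'' rule gives nilpotence of $L(A)$ of index $13$; nilpotence of $R(A)$ of the same index is automatic because the identity $w_7^{(3)}=0$ is invariant under reversing the order of the arguments, so passing to the opposite algebra interchanges $L(A)$ and $R(A)$ while preserving the relation. That the index is not smaller than $13$ is witnessed by the arity-$25$ monomial, which survives as a nonzero normal form. For the second claim, one constructs the ternary analogue of the ``zig-zag'' monomials from the proof of Theorem~\ref{th:BNil} and checks, by direct inspection of the leading monomials, that they remain irreducible for arbitrarily large arity; this exhibits nonvanishing products of mixed left and right multiplications of unbounded length, so $U(A)$ is not nilpotent.

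The genuine obstacle is not the logic but the size of the computation. The number of ternary-tree monomials of arity $2m+1$ is the Fuss--Catalan number $\frac{1}{2m+1}\binom{3m}{m}$, which grows exponentially in $m$; already the top arity $27$ carries of order $\frac{1}{27}\binom{39}{13}\approx 3\cdot10^{8}$ monomials, so forming and reducing the S-polynomials up to that arity is an enormous task---this is precisely why the statement is offered as a conjecture rather than a theorem (compare the analogous binary case $w_5=0$, whose single, unverified computation already ``took a few days''). Completing it along these lines would require substantially more computing power together with an independent re-implementation for verification.

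Such a brute-force route is where I would instead seek a structural shortcut. The shape of the Gr\"obner basis for $w_5^{(3)}=0$, and the predicted index $13=\frac{3^{3}-1}{2}$ fitting the ternary analogue $\frac{3^{(d-1)/2}-1}{2}$ of the binary $2^{d-1}-1$, strongly suggest that the leading monomials organize themselves recursively. If one could prove directly---by an inductive operadic argument on the arity rather than by computation---that the iterated left multiplication of length $13$ reduces to zero while that of length $12$ does not, one would obtain a genuine theorem valid over any field of characteristic zero. The hard part, and the step I expect to be the real difficulty, is isolating and then rigorously establishing this self-similar structure of the leading ideal; absent it, one is confined to the heavy computational route above.
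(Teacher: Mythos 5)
You have not given a proof, and in fact there is no proof in the paper to compare against: this statement is offered there as a conjecture, introduced as ``a speculation based on the known result for the binary case,'' and unlike the binary $w_5=0$ conjecture it was apparently never even checked once by machine. What you have written is an accurate reconstruction of the method by which the paper proves its actual theorems (Theorems \ref{th:BNil} and \ref{th:3Nil}): compute the reduced Gr\"obner basis of the nonsymmetric operad encoding $w_7^{(3)}=0$; read nilpotence of $L(A)$ of index $13$ off a rewriting rule of arity $27$ annihilating the iterated left-multiplication monomial; obtain sharpness from the arity-$25$ monomial surviving as a normal form; and deduce the claim for $R(A)$ from the invariance of $w_7^{(3)}=0$ under reversal of the arguments (the opposite-algebra argument the paper uses in the binary case). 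All of these translation steps are stated correctly, as is your count of $12$ monomials in $w_7^{(3)}$, your estimate of roughly $3\times 10^8$ monomials in arity $27$, and the numerological fit $13=(3^3-1)/2$ with the binary pattern $2^{d-1}-1$. But the entire mathematical content resides in the Gr\"obner basis computation (or in a structural argument replacing it), and that is precisely the part you defer; as you concede yourself, what remains is a plan, not a proof. That is the genuine gap --- the only one that matters --- though it is the same gap the paper itself leaves open.

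Two smaller points. First, the conjecture you were asked to address has no ``second claim'': unlike Theorem \ref{th:3Nil}, it asserts nothing about the multiplicative enveloping algebra $U(A)$, so your paragraph about ternary zig-zag monomials sketches a statement that is not part of the conjecture. Second, ``lies in the leading ideal'' is not synonymous with ``reduces to zero'': divisibility by a leading monomial only licenses a single reduction step, whereas vanishing in the quotient is membership in the operadic ideal itself; your parenthetical conflates the two notions, although the criterion you actually apply afterwards is the correct one.
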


We also believe that the following result is true.

\begin{conjecture}
For each $d\in\mathbb{N}$, the reduced Gr\"obner basis for the nonsymmetric operad encoding the identity $w_{2d+1}^{(3)}=0$ is finite.
\end{conjecture}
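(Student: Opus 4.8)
The plan is to reformulate the statement purely in terms of leading monomials and then to control those using the nilpotence phenomena already established in the small cases. For a fixed monomial order the reduced Gröbner basis of an operadic ideal $I_d$ (here $I_d$ is the ideal of the free nonsymmetric operad on one ternary generator generated by $w_{2d+1}^{(3)}$) is in bijection with the minimal generators of the leading-term ideal $\mathrm{in}(I_d)$, so the conjecture is \emph{equivalent} to the assertion that the tree-monomial ideal $\mathrm{in}(I_d)$ is finitely generated. The first point I would stress is that this is genuinely non-automatic: operadic divisibility (appearance of one tree as a connected subtree of another) is \emph{not} a well-quasi-order, in contrast with Kruskal's theorem for homeomorphic tree embedding. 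This is exactly why free operads fail to be operadically Noetherian and why infinite Gröbner bases do occur; indeed the zig-zag monomials from the proof of Theorem \ref{th:BNil} already exhibit the relevant infinite antichains. Any proof must therefore exploit the specific combinatorics of $w_{2d+1}^{(3)}$.

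First I would pin down the leading monomial of the single defining relation $w_{2d+1}^{(3)}=0$ for the graded path-lexicographic order; from the inductive formula for $w_n^{(3)}$ and the explicit rules in the proof of Theorem \ref{th:3Nil} one expects it to be the fully left-nested ``left comb'', the rewriting rule re-expressing this comb in terms of less left-leaning trees. The real target is a finite combinatorial description of the \emph{normal} monomials, those avoiding $\mathrm{in}(I_d)$: the evidence of Theorems \ref{th:BNil} and \ref{th:3Nil} suggests that normal forms are ``narrow'' trees of zig-zag type, and I would try to prove that their class is characterised by avoidance of finitely many minimal forbidden subtrees, which is precisely finite generation of $\mathrm{in}(I_d)$.

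The structural engine would be a version of the nilpotence results uniform in $d$. Generalising Theorems \ref{th:BNil} and \ref{th:3Nil}, I would aim to show directly that sufficiently deep left-nested and right-nested monomials reduce to zero modulo $I_d$; these vanishing leading monomials play the role of the arity-$8$ (resp. arity-$9$) generators in the computed bases and confine the leading monomials to bounded left- and right-comb depth. With such confinement in hand one runs the operadic Buchberger procedure and analyses the critical pairs, which arise from the finitely many ways two left combs can overlap; the recursive structure of $w_n^{(3)}$ should allow one to prove, by induction on arity, that every overlap of arity exceeding an explicit bound $N(d)$ reduces to zero. Relating the basis for $w_{2d+1}^{(3)}$ to that for $w_{2d-1}^{(3)}$ by an induction on $d$ would, in parallel, give a recursive handle on the list of minimal forbidden patterns; as a sanity check one could verify that the resulting Hilbert series of the operad is rational, a necessary consequence of finiteness.

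The main obstacle is precisely the termination of this critical-pair analysis uniformly in $d$: because divisibility is not a well-quasi-order there is no soft mechanism forcing it, so one must exhibit an explicit finite list of minimal leading monomials and prove that nothing else is generated. Compounding this, the required nilpotence of $L(A)$ and $R(A)$ for general $d$ is itself open (it is raised as a Question already in the binary case), so the engine of the argument is conjectural. For these reasons I regard the statement as a genuine research programme rather than a result with a short proof: the realistic near-term deliverable is an explicit conjectural description of the reduced Gröbner basis as a function of $d$, matched against the \texttt{Haskell} computations for small $d$, together with an inductive scheme whose termination step is the crux.
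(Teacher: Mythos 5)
First, a point of status: the statement you were asked to prove is one of the paper's \emph{conjectures}. The paper offers no proof of it (it is introduced by ``We also believe that the following result is true''), so there is no argument of the author's to compare yours against, and your own concluding assessment — that this is a research programme rather than a result with a short proof — is exactly the paper's position. Within that frame, parts of your outline are sound: the reformulation that finiteness of the reduced Gr\"obner basis is equivalent to finite generation of the tree-monomial ideal of leading terms is correct and is the right first step, and you are right that operadic divisibility of tree monomials is not a well-quasi-order, so finiteness is genuinely non-automatic.

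Nevertheless, as a proof attempt the proposal has concrete gaps beyond the ones you flag. The load-bearing engine — vanishing, modulo the ideal, of all sufficiently deep left- and right-nested monomials, uniformly in $d$ (equivalently, nilpotence of $L(A)$ and $R(A)$) — is itself open; the paper raises it only as a Question, and you concede this. But even granting it, confining leading monomials to bounded comb depth does not bound the number of minimal generators of the initial ideal: the computed bases in Theorems \ref{th:BNil} and \ref{th:3Nil} terminate because \emph{all} monomials of certain shapes in a fixed arity rewrite to zero, not merely combs, and you supply no mechanism forcing the critical-pair analysis to close after finitely many arities — which you correctly identify as the crux, but identifying a crux is not resolving it. There is also a factual slip: the zig-zag monomials $m_k$ from the proof of Theorem \ref{th:BNil} do not form an infinite antichain; by construction $m_k$ occurs as a subtree of $m_{k+1}$, so they form a chain under divisibility, and their role in the paper is as an infinite family of \emph{normal forms} witnessing non-nilpotence of $U(A)$ — a different phenomenon from the failure of well-quasi-ordering, which must be witnessed by other families. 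In short: your plan is a reasonable research outline consistent with the paper's computational evidence, but it proves nothing, exactly as you say — which matches the paper, where the statement remains open.
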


It is also natural to raise the following question.

\begin{question}
Let $d\in\mathbb{N}$. Is it true that for every algebra $A$ satisfying the identity $w_{2d+1}^{(3)}=0$, the associative algebras $L(A)$ and $R(A)$ are nilpotent? 
\end{question}

\subsection{Commutative case}

Throughout this section, we assume that the algebra $A$ is commutative, and we denote the ternary product by $[a_1 a_2 a_3]$. We shall consider, for each $n$, the multilinear operation $t_n^{(3)}(a_1,\ldots,a_n)$ defined as the sum of all distinct multilinear terms of degree $n$ that one can define using the product in $A$ (sum of all \emph{shuffle} ternary trees). We say that an algebra $A$ is \emph{weakly nil of index $n$} if the identity $t_d^{(3)}=0$ holds in $A$ for all $d\ge n$. 

As in the binary case, the following result is established by a direct algebraic manipulation. 

\begin{proposition}
Suppose that $A$ is a ternary algebra satisfying the identity $w_{2d+1}^{(3)}=0$. Then the symmetrized product 
 \[
[a_1 a_2 a_3]=(a_1 a_2 a_3) + (a_1 a_3 a_2) + (a_2 a_3 a_1) + (a_2 a_1 a_3) + (a_3 a_1 a_2) + (a_3 a_2 a_1)
 \]
satisfies the identity $t_{2d+1}^{(3)}=0$. 
\end{proposition}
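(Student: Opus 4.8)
The plan is to reproduce, in the ternary setting, the ``symmetrize and collect'' argument used for the binary case, organizing the bookkeeping through the monomial basis of the free noncommutative ternary algebra. Its multilinear component of degree $n$ has a basis indexed by pairs $(T,\ell)$, where $T$ is a planar ternary tree with $n$ leaves (each internal vertex carrying exactly three inputs) and $\ell$ is a bijective labelling of the leaves by $\{1,\dots,n\}$; since $n=2d+1$, every such $T$ has exactly $d$ internal vertices. I would prove the proposition by showing that, inside this space, the full symmetrization $\sum_{\sigma\in S_n} w_n^{(3)}(a_{\sigma(1)},\dots,a_{\sigma(n)})$ of the left-hand side equals the operation $t_n^{(3)}$ evaluated on the symmetrized product $[\,\cdot\,]$, both being equal to the sum of all basis monomials. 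Granting this, the hypothesis that $w_n^{(3)}=0$ holds on $A$ (hence for every permutation of the arguments) gives, after summing over $\sigma$, that $t_n^{(3)}=0$ for $[\,\cdot\,]$.

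First I would compute the symmetrization of the left-hand side. By the inductive definition, $w_n^{(3)}(a_1,\dots,a_n)=\sum_T (T,\mathrm{id})$, the sum over all planar ternary shapes $T$ each carrying the natural left-to-right labelling. Thus $w_n^{(3)}(a_{\sigma(1)},\dots,a_{\sigma(n)})=\sum_T (T,\sigma)$, and summing over $\sigma\in S_n$ gives $\sum_T\sum_{\ell}(T,\ell)$, i.e.\ every basis monomial with coefficient $1$.

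Next I would compute the right-hand side. The operation $t_n^{(3)}$ is the sum of all distinct multilinear ternary terms, that is, the sum over all \emph{abstract} (children-unordered) labelled ternary trees $\tau$. Evaluating such a $\tau$ with $[a_1a_2a_3]=\sum_{\pi\in S_3}(a_{\pi(1)}a_{\pi(2)}a_{\pi(3)})$ and expanding produces the sum of the $6^d$ planar trees obtained by ordering the three children at each internal vertex. The crucial point is that this action of $(S_3)^{d}$ on the planar representatives of $\tau$ is \emph{free}: as the labelling is bijective, the three children-subtrees at any internal vertex have disjoint nonempty leaf-sets and are therefore pairwise distinct, so no nontrivial reordering can fix a planar representative. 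Hence each $\tau$ expands to exactly $6^{d}$ distinct basis monomials, these orbits partition the set of all planar labelled trees, and summing over $\tau$ again yields every basis monomial with coefficient $1$.

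Comparing the two computations shows the elements coincide, which proves the proposition. The only genuine content, and the step I would verify most carefully, is the freeness of the $(S_3)^{d}$-action: it is what forces a \emph{uniform} multiplicity $6^{d}$ across all trees, and hence guarantees that the symmetrization is an honest (nonzero) scalar multiple of $t_n^{(3)}$ rather than some less structured combination. This freeness uses multilinearity essentially, which is exactly why working over a field of characteristic zero is the natural framework.
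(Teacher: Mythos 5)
Your proposal is correct and is essentially the paper's own argument: the paper proves this by ``adding all permutations of the identity $w_{2d+1}^{(3)}=0$ and collecting terms,'' which is exactly your symmetrize-and-collect computation, with the tree-orbit counting making the bookkeeping explicit. Your freeness observation even sharpens the paper's ``scalar multiple'' to the precise statement that both sides equal the sum of all planar labelled ternary monomials with coefficient $1$.
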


This leads to the following natural question. 

\begin{question}
Let $A$ be the free algebra for the identity $w_{2d+1}^{(3)}=0$ generated by $x_1,\ldots,x_p$ for some $p\in\mathbb{N}$. For which $d\in\mathbb{N}$ and is the subalgebra of $A$ generated by $x_1,\ldots,x_p$ under the symmetrized product $[-,-,-]$ free for the identity $t_{2d+1}^{(3)}=0$?
\end{question}

To conclude this section, we illustrate another facet of the operadic viewpoint by explaining an interpretation of another way of dealing with totally commutative ternary algebras proposed by Yagzhev in \cite{Yag}. He remarked, that one can relate properties of the polynomial map $x\mapsto x-[x,x,x]$ to properties of the polynomial map $x\mapsto x-(x\cdot x)\cdot x$, where $(-\cdot-)$ is a bilinear operation satisfying the properties
\begin{gather}
x\cdot (y\cdot y)=-(y\cdot y)\cdot x,\label{eq:yag1}\\
(x\cdot y+y\cdot x)y=2(y\cdot y)\cdot x,\label{eq:yag2}
\end{gather} 
as well as an identity of the form $(\cdots((x\cdot(y\cdot y))\cdot(y\cdot y))\cdots)\cdot(y\cdot y)=0$. He explains the equivalence in terms of constructing one finite-dimensional algebra out of another one. However, this is a consequence of a formal algebraic manipulation on the level of operads, which suggests that, though psychologically easier, this change probably does not reduce the complexity of the problem.

\begin{proposition}
For a binary operation satisfying identities \eqref{eq:yag1}, \eqref{eq:yag2}, the operation $\phi(x,y,z)=x\cdot (y\cdot z+z\cdot y)-(y\cdot z+z\cdot y)\cdot x$ is totally commutative and does not satisfy any other identity. Moreover, an identity of the form 
an identity of the form $(\cdots((x\cdot(y\cdot y))\cdot(y\cdot y))\cdots)\cdot(y\cdot y)=0$ is precisely the Engel property 
 \[
\phi(\phi(\cdots\phi(\phi(x,y,y),y,y)\cdots),y,y)=0.
\] 
\end{proposition}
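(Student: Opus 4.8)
The plan is to reduce everything to the two \emph{polarized} (linearized) forms of the defining identities, a short symmetric-function manipulation, and one genuinely operadic ingredient for the clause ``does not satisfy any other identity''. Throughout I write $s(y,z)=y\cdot z+z\cdot y$ for the symmetrized product, so that $\phi(x,y,z)=x\cdot s(y,z)-s(y,z)\cdot x$ is visibly symmetric in $y$ and $z$. Polarizing \eqref{eq:yag1} in $y$ (replacing $y\cdot y$ by $s(y,z)$, which is legitimate since $\kk$ has characteristic zero) gives $x\cdot s(y,z)=-s(y,z)\cdot x$, whence $\phi(x,y,z)=2\,x\cdot s(y,z)=-2\,s(y,z)\cdot x$; polarizing \eqref{eq:yag2} in $y$ gives $s(x,y)\cdot z+s(x,z)\cdot y=2\,s(y,z)\cdot x$.

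First I would prove total commutativity. Using $s(a,b)\cdot c=-\tfrac12\phi(c,a,b)$, the polarized \eqref{eq:yag2} becomes the single relation $\phi(z,x,y)+\phi(y,x,z)=2\,\phi(x,y,z)$, valid for all $x,y,z$. Since it holds identically, I may rename $x\leftrightarrow y$ and use the already-established symmetry in the last two slots to get $\phi(z,x,y)+\phi(x,y,z)=2\,\phi(y,x,z)$. Subtracting the two relations forces $\phi(x,y,z)=\phi(y,x,z)$, and then either relation gives $\phi(z,x,y)=\phi(x,y,z)$. The transpositions of slots $(2,3)$ and $(1,2)$ generate $S_3$, so $\phi$ is totally symmetric.

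Next, the Engel clause is a direct computation. Specializing $z=y$ gives $s(y,y)=2\,y\cdot y$ and hence $\phi(x,y,y)=2\,x\cdot s(y,y)=4\,x\cdot(y\cdot y)$; that is, applying $\phi(-,y,y)$ is, up to the nonzero scalar $4$, right multiplication by $y\cdot y$. Iterating $m$ times therefore yields
\[
\phi(\phi(\cdots\phi(x,y,y)\cdots,y,y),y,y)=4^{m}\,(\cdots((x\cdot(y\cdot y))\cdot(y\cdot y))\cdots)\cdot(y\cdot y),
\]
with $m$ copies of $y\cdot y$ on each side. As $4^{m}\neq 0$ in characteristic zero, the $m$-th Engel identity for $\phi$ and the nested right-multiplication identity are literally proportional, so they are equivalent.

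The hard part will be ``does not satisfy any other identity'', i.e.\ that every identity satisfied by $\phi$ is a consequence of its total symmetry. I would phrase this as the injectivity of the morphism of symmetric operads $\calT\to\calP$ sending the generator of the free totally symmetric ternary operad $\calT$ to $\phi$, where $\calP$ is the operad generated by the bilinear operation $\cdot$ modulo \eqref{eq:yag1} and \eqref{eq:yag2}. Well-definedness of this morphism is exactly the total commutativity proved above, and the whole content is injectivity. Following the computational method used elsewhere in the paper, I would compute a reduced Gr\"obner basis (for the graded path-lexicographic order) of the shuffle operad presenting $\cdot$ modulo the polarized relations, read off the normal-form monomial basis of each arity component, and check that the iterated $\phi$-words span a subspace whose dimension in each arity matches that of $\calT$ (whose arity-$n$ component, $n$ odd, is spanned by symmetry classes of ternary trees). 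Equal dimensions upgrade the surjection onto the image to an isomorphism, giving injectivity. The genuine obstacle is controlling these dimensions: one must confirm that the rewriting system terminates with exactly the predicted normal forms and that no unexpected reduction identifies distinct $\phi$-words, which is precisely where the operadic Gr\"obner-basis bookkeeping carries the weight.
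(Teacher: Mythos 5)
Your proposal is correct and takes essentially the same route as the paper: your polarized relations and the subtraction/relabeling trick for total symmetry are exactly the paper's manipulation with $a_1\bullet a_2=a_1\cdot a_2+a_2\cdot a_1$ and $\{a_1,a_2\}=a_1\cdot a_2-a_2\cdot a_1$ (your relation $\phi(z,x,y)+\phi(y,x,z)=2\,\phi(x,y,z)$ is the paper's $\{a_1\bullet a_2,a_3\}+\{a_1\bullet a_3,a_2\}=2\{a_2\bullet a_3,a_1\}$ up to sign), and the Engel clause is the same scalar computation. For the ``no other identity'' clause the paper argues just as you outline, namely that these relations, translated into shuffle-operad language, form a Gr\"obner basis whose normal forms show the suboperad generated by $\phi$ is free, so your plan matches its proof, including the fact that the actual Gr\"obner-basis verification is asserted rather than written out.
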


\begin{proof}
It is convenient to introduce the operations $a_1\bullet a_2 =a_1\cdot a_2+a_2\cdot a_1$, $\{a_1,a_2\}:=a_1\cdot a_2-a_2\cdot a_1$. The multilinearization of the first identity becomes $(a_1\bullet a_2)\bullet a_3=0$, and the multilinearization of the second identity becomes, modulo the first one, $$\{a_1\bullet a_2,a_3\}+\{a_1\bullet a_3,a_2\}=2\{a_2\bullet a_3,a_1\},$$ which is equivalent to the simpler $\{a_1\bullet a_2,a_3\}=\{a_1\bullet a_3,a_2\}$, clearly implying the total commutativity of the operation $\phi$. Moreover, once translated into the language of the shuffle operads, the corresponding relations are immediately checked to form a Gr\"obner basis, easily implying that the suboperad generated by the  operation $\phi$ is free. The statement about the Engel property is obvious.
\end{proof}

\section{Products of higher arities}

In this section, $A$ is a $k$-ary algebra with the operation $a_1,\ldots,a_k\mapsto (a_1 a_2 \ldots a_k)$. As explained in Introduction, we consider the operation $w_n^{(k)}(a_1,\ldots,a_n)$ as the sum of all distinct multilinear terms of degree $n$ that one can define using this operation if putting the arguments $a_1,\ldots,a_n$ in the natural order.

\begin{theorem}\label{th:KNil}
Let $A$ be an algebra satisfying the identity $w_{2k-1}^{(k)}=0$. The associative algebras $L(A)$ generated by left multiplications $l_{a_1,\ldots,a_{k-1}}\colon x\mapsto (a_1 \ldots a_{k-1} x)$ and $R(A)$ generated by right multiplications $r_{a_1,\ldots,a_{k-1}}\colon x\mapsto (x a_1 \ldots a_{k-1})$ are nilpotent of index $k+1$.
\end{theorem}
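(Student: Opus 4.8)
The plan is to follow the route used for the binary and ternary cases (Theorems \ref{th:BNil} and \ref{th:3Nil}) and reduce the assertion to the vanishing of a single multilinear monomial. First I would observe that a product of $m$ left multiplications acts on $x$ as the fully right-nested \emph{comb}
\[
l_{\vec a_1}\cdots l_{\vec a_m}(x)=(\vec a_1(\vec a_2(\cdots (\vec a_m\, x)\cdots))),
\]
in which each successive $k$-ary operation is plugged into the last slot of the previous one and $\vec a_i$ abbreviates the $(k-1)$-tuple filling the first $k-1$ slots of the $i$-th node. Since $L(A)$ is generated by the $l_{\vec a}$, the equality $L(A)^{k+1}=0$ is exactly the identical vanishing of this depth-$(k+1)$ comb $C_{k+1}$. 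The claim for $R(A)$ then follows by passing to the opposite algebra $A^{\mathrm{op}}$ (reverse the order of the arguments of the $k$-ary operation): the defining sum $w_{2k-1}^{(k)}$ is preserved under this reversal, while left and right multiplications are interchanged, just as in the proof of Theorem \ref{th:BNil}. Thus everything reduces to showing that $C_{k+1}=0$ (and, for the index, that $C_k\neq 0$) modulo the identity $w_{2k-1}^{(k)}=0$.

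The elementary input is a \emph{sliding relation} obtained by solving $w_{2k-1}^{(k)}=0$ for its last term. Since
\[
w_{2k-1}^{(k)}(a_1,\dots,a_{2k-1})=\sum_{j=1}^{k}\bigl(a_1\cdots a_{j-1}\,(a_j\cdots a_{j+k-1})\,a_{j+k}\cdots a_{2k-1}\bigr),
\]
substituting an arbitrary tree $X$ for $a_{2k-1}$ gives
\[
\bigl(a_1\cdots a_{k-1}(a_k\cdots a_{2k-2}\,X)\bigr)=-\sum_{j=1}^{k-1}\bigl(a_1\cdots a_{j-1}(a_j\cdots a_{j+k-1})a_{j+k}\cdots a_{2k-2}\,X\bigr).
\]
In operator form this is $l_{\vec a}\,l_{\vec b}=-\sum_{j=1}^{k-1}l_{\vec c_j}$, rewriting a product of two generators through single left multiplications whose entries now contain a nested $k$-ary product.

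Now comes the crux. Applying the sliding relation to the top two nodes of $C_{k+1}$ (with $X=C_{k-1}$) expresses $C_{k+1}=-\sum_{j=1}^{k-1}E_j$, where each $E_j$ consists of the comb $C_{k-1}$ sitting in the last slot of a root whose $j$-th slot carries an auxiliary $k$-ary node filled entirely by leaves. I would then reduce each $E_j$ in turn by applying the sliding relation to its root together with the head of the embedded $C_{k-1}$, and iterate down the spine. The heuristic that explains why the index is $k+1$ is a pigeonhole phenomenon: along a spine of $k+1$ nested nodes each node can be slid through only $k$ slots of its parent, so the accumulated relation instances are forced to telescope, contributing opposite signs in even and odd depth exactly as in the binary computation $a(b(cx))=((ab)c)x=-((ab)c)x$. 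The hard part will be the bookkeeping of this telescoping: unlike the two-term binary relation, the $k$-term sliding relation makes the reduction branch, and one must show that every off-spine monomial produced along the way is cancelled or reduced to zero. Making this uniform in $k$ is the real content, and I would try to package it as an auxiliary family of reduction identities proved by downward induction on the slot occupied by the lowest auxiliary node; this is precisely the hand computation that the Gröbner basis package performs automatically for $k=2,3$.

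Finally, for the lower bound $C_k\neq 0$ I would exhibit $C_k$ as a normal monomial, exactly as the surviving ``zig-zag'' and right-comb monomials are used at the end of the proof of Theorem \ref{th:BNil}: with respect to the graded path-lexicographic order the leading terms of all consequences of $w_{2k-1}^{(k)}=0$ are combs that are either taller than $C_k$ or place an auxiliary node in an earlier slot, so $C_k$ is divisible by none of them and therefore survives in the quotient. Equivalently, one verifies nonvanishing on a small explicit model. Since the theorem is stated without the parenthetical ``and not less'', the essential content is the upper bound $L(A)^{k+1}=0$, and the lower bound can be recorded as a remark.
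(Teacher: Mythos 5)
Your framework is correct as far as it goes: $L(A)^{k+1}=0$ is indeed equivalent to the identical vanishing of the depth-$(k+1)$ comb $C_{k+1}$; the case of $R(A)$ does follow by passing to the opposite algebra, which preserves the identity $w_{2k-1}^{(k)}=0$ up to reversing the arguments; and your sliding relation, in operator form
\[
l_{u_1,\ldots,u_{k-1}}\,l_{v_1,\ldots,v_{k-1}}=-\sum_{j=1}^{k-1}l_{u_1,\ldots,u_{j-1},(u_j\cdots u_{k-1}v_1\cdots v_j),v_{j+1},\ldots,v_{k-1}},
\]
is a correct consequence of the identity. You are also right that the theorem as stated contains no tightness claim, so the lower bound may be dropped. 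The gap is that the reduction scheme you then describe cannot prove the vanishing, even in principle. Merging adjacent generators along the spine, as you propose, terminates after exactly $k$ applications of the sliding relation in every branch, since each application replaces two generators by one; the outcome is $(-1)^k$ times a sum of tree monomials with positive coefficients, which is a nonzero element of the free algebra. So this one-directional rewriting exhibits nothing: you are left having to show that this new sum lies in the ideal, which is the original problem again, now for deeper-nested single multiplications. The vanishing of $C_{k+1}$ is forced only by playing \emph{different} reduction paths against each other and by applying the identity inside the composite entries of the subscripts: already for $k=2$ the entire proof is the comparison $a(b(cx))=((ab)c)x$ (reduce at the top first) against $a(b(cx))=(a(bc))x=-((ab)c)x$ (reduce inside first), which yields the new relation $((ab)c)x=0$; for $k=3$ one needs the intermediate arity-$7$ relation visible in the Gr\"obner basis of Theorem \ref{th:3Nil} before the arity-$9$ vanishing can be derived. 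Producing and organizing such derived relations uniformly in $k$ --- in effect, carrying out the S-polynomial computations by hand for all $k$ --- is the entire content of the theorem. Your proposal explicitly defers exactly this step (``the hard part'', ``the real content'') and offers only the pigeonhole/telescoping heuristic, which is not an argument; what you have is a correct reduction of the theorem to an unproved combinatorial lemma.

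You could not have known this, but the paper itself does not carry out this combinatorics either: its entire proof of Theorem \ref{th:KNil} is the citation \cite[Th.~10]{MR4106894}, the paper in which the elements $w_{2k-1}^{(k)}$ first appeared. A self-contained argument along your lines would therefore be a genuine addition to the paper, but only if the uniform-in-$k$ cancellation scheme is actually supplied rather than announced.
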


\begin{proof} 
This is the result of \cite[Th.~10]{MR4106894}.
\end{proof}

We have several conjectures concerning these identities. 

\begin{conjecture}\leavevmode
\begin{itemize}
\item The bound on the nilpotence index of $L(A)$ and $R(A)$ in Theorem \ref{th:KNil} is tight.
\item For all $k\in\mathbb{N}$ and $d\equiv1\pmod{k-1}$, the reduced Gr\"obner basis for the nonsymmetric operad encoding the identity $w_d^{(k)}=0$ is finite.
\end{itemize}
\end{conjecture}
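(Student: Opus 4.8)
The plan is to attack the conjecture by making the reduced Gr\"obner basis of the nonsymmetric operad $\mathcal{O}_{d,k}=\mathcal{F}(\mu)/(w_d^{(k)})$ — the free operad on one $k$-ary generator $\mu$ modulo the ideal generated by $w_d^{(k)}$ — as explicit as possible, since both bullets are assertions about its normal monomials. The useful starting point is a reformulation of the relation for $d=2k-1$. Writing $w_{2k-1}^{(k)}=\sum_{j=1}^{k}T_j$, where $T_j$ places the inner $k$-ary node in the $j$-th slot of the root, the term $T_k=(a_1\cdots a_{k-1}(a_k\cdots a_{2k-1}))$ is a length-two left comb, $T_1=((a_1\cdots a_k)a_{k+1}\cdots a_{2k-1})$ is a length-two right comb, and for $1\le j\le k-1$ the last argument of the root of $T_j$ is the single leaf $a_{2k-1}$. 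Reading $a_{2k-1}$ as an input $x$, the relation becomes the statement that a product of two left multiplications collapses to a signed sum of single left multiplications,
\[
l_{\bar a^{(1)}}\,l_{\bar a^{(2)}}=-\sum_{j=1}^{k-1}l_{\bar d^{(j)}},
\]
where $\bar d^{(j)}$ is obtained from the concatenation $(\bar a^{(1)},\bar a^{(2)})$ of length $2k-2$ by contracting the length-$k$ window in positions $j,\dots,j+k-1$ into one $\mu$-product. In particular $L(A)$ is spanned by single left multiplications, and, together with the analogous identity for right multiplications, this is the engine behind the upper bound of Theorem \ref{th:KNil}.

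For the first bullet, given that upper bound, it suffices to produce for each $k$ an algebra with $L(A)^{k}\neq 0$, and I would show this already holds in the free algebra on $w_{2k-1}^{(k)}=0$. Fixing the graded path-lexicographic order, for which the leading monomial of $w_{2k-1}^{(k)}$ is $T_1$, a product of $k$ left multiplications is the comb branching only into last slots and therefore contains no subtree of shape $T_1$. The plan is to prove, by induction on arity, that every leading monomial of the reduced Gr\"obner basis except the terminal length-$(k+1)$ comb contains an internal node in a non-final slot of its parent — as one sees for $k=3$, where the arity-$5$ and arity-$7$ leading terms branch into the first and the middle slot. No such monomial divides a length-$k$ last-slot comb, so the latter is a normal monomial and hence nonzero, giving the lower bound $k+1$. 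Should a uniform description of the leading terms be out of reach, the same conclusion follows from a single explicit finite-dimensional model of the relation in which the length-$k$ comb operator survives.

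The second bullet is the real difficulty. The collapse identity is deceptively simple: iterating it only drives $\mu$-products deeper into the parameters, so it does not by itself produce the higher relations of $\mathcal{O}_{d,k}$, which appear as S-polynomials of overlaps of the defining rule with itself and with earlier rules (the arity-$6,7,8$ rules in the $w_4=0$ computation and the arity-$7,9$ rules in the $w_5^{(3)}$ computation are exactly of this kind). My plan for finiteness is to bound the arity of reduced Gr\"obner basis elements in terms of $d$ and $k$ by classifying these overlap patterns and showing that, once the arity exceeds an explicit threshold, every resulting S-polynomial reduces to zero, so that no new obstruction survives. The main obstacle is precisely this uniform termination: the overlaps proliferate combinatorially, and the cancellations that render the high-arity S-polynomials reducible are the same global cancellations — threading the relation through every level of a tree — that the collapse identity fails to capture. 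A clean argument will most likely need a structural input, such as Koszulness or a PBW-type basis for $\mathcal{O}_{d,k}$, or the homotopical reading of $w_d^{(k)}$ from \cite{MR4106894} that already yields Theorem \ref{th:KNil}.
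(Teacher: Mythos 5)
The statement you are attacking is one of the paper's open conjectures: the paper offers no proof of either bullet (Theorem \ref{th:KNil} itself is quoted from \cite{MR4106894}, and the conjecture records precisely what remains unknown beyond it). Your proposal, by your own admission, does not close the conjecture either, so the question is whether your plan supplies the missing ideas; it does not yet. Your reformulation of $w_{2k-1}^{(k)}=0$ as the operator identity $l_{\bar a^{(1)}}\,l_{\bar a^{(2)}}=-\sum_{j=1}^{k-1}l_{\bar d^{(j)}}$ is correct, but it cannot be ``the engine behind the upper bound'': it only shows that $L(A)$ is spanned by single left multiplications, which by itself is perfectly compatible with $L(A)$ failing to be nilpotent; the vanishing of products of $k+1$ operators requires exactly the higher, non-obvious consequences of the relation (the analogues of the arity $7$ and arity $9$ rules in the $k=3$ case) that you later concede the collapse identity does not generate. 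For the first bullet, your argument hinges on the claim that every leading monomial of the reduced Gr\"obner basis, other than the terminal last-slot comb, has an internal node in a non-final slot of its parent. This is verified only for $k=2$ and $k=3$, i.e., in exactly the cases the paper computed; for general $k$ you give no argument, and the proposed ``induction on arity'' is not set up --- controlling the S-polynomial combinatorics for arbitrary $k$ is the same difficulty as the second bullet. (To be fair, normality of the arity $k(k-1)+1$ comb only requires knowing the leading monomials up to that arity, not the whole basis, so the first bullet is genuinely weaker than the second; but that finite-arity computation for general $k$ is still missing, and your fallback --- an explicit finite-dimensional algebra in which the length-$k$ comb operator survives --- is named but not constructed.)

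For the second bullet, your text is an accurate description of the problem rather than a method for solving it: ``classify the overlap patterns and show that beyond an explicit threshold every S-polynomial reduces to zero'' is essentially a restatement of what finiteness of the reduced Gr\"obner basis means, and you say yourself that this uniform termination is the main obstacle and would likely require Koszulness, a PBW-type basis, or the homotopical input of \cite{MR4106894}, none of which is provided. So the proposal correctly identifies useful intermediate structures (the collapse identity, the role of the terminal comb rule, the distinction between low-arity and overlap-generated relations), but it contains no step that advances either bullet beyond the computational evidence already present in the paper; both remain open.
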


It is also natural to raise the following question.

\begin{question}
Let $k\in\mathbb{N}$ and $d\equiv1\pmod{k-1}$. Is it true that for every algebra $A$ satisfying the identity $w_d^{(k)}=0$, the associative algebras $L(A)$ and $R(A)$ are nilpotent? 
\end{question}

If we work with a $k$-ary algebra that is totally commutative, with the product $[a_1\ldots a_k]$, we shall consider, for each $n$, the multilinear operation $t_n^{(k)}(a_1,\ldots,a_n)$ defined as the sum of all distinct multilinear terms of degree $n$ that one can define using the product in $A$ (sum of all \emph{shuffle} $k$-ary  trees). We say that an algebra $A$ is \emph{weakly nil of index $n$} if the identity $t_d^{(k)}=0$ holds in $A$ for all $d\ge n$. 

As in the binary case, the following result is established by a direct algebraic manipulation. 

\begin{proposition}
Suppose that $A$ is a $k$-ary algebra that satisfies the identity $w_d^{(k)}=0$. Then the symmetrized product 
 \[
[a_1 \ldots a_k]=\sum_{\sigma\in S_k}(a_{\sigma(1)} \ldots a_{\sigma(k)})
 \]
satisfies the identity $t_d^{(k)}=0$. 
\end{proposition}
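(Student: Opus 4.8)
The plan is to imitate the binary case: I would symmetrize the hypothesis $w_d^{(k)}=0$ over all reorderings of its arguments and identify the resulting identity with $t_d^{(k)}=0$ for the product $[a_1\ldots a_k]=\sum_{\sigma\in S_k}(a_{\sigma(1)}\ldots a_{\sigma(k)})$. Concretely, since $A$ satisfies $w_d^{(k)}=0$, every instance $w_d^{(k)}(a_{\pi(1)},\ldots,a_{\pi(d)})$ vanishes for $\pi\in S_d$, and hence so does the sum $\sum_{\pi\in S_d} w_d^{(k)}(a_{\pi(1)},\ldots,a_{\pi(d)})$. Everything therefore reduces to the purely combinatorial identity, in the free nonassociative $k$-ary algebra, asserting that this symmetrized sum equals $t_d^{(k)}(a_1,\ldots,a_d)$ once the latter is expanded by substituting $[a_1\ldots a_k]=\sum_{\sigma\in S_k}(a_{\sigma(1)}\ldots a_{\sigma(k)})$ at every internal node.

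To prove that identity I would expand both sides into the basis of the free nonassociative $k$-ary algebra, whose monomials are planar $k$-ary trees with $d$ leaves bijectively labelled by $a_1,\ldots,a_d$, and check that each such tree occurs exactly once on each side. For the left-hand side, $w_d^{(k)}$ is by definition the sum over all planar $k$-ary tree shapes with leaves read in the natural order; fixing a shape $T$ and letting $\pi$ range over $S_d$ realizes every bijective labelling of the leaves of $T$ exactly once, so the symmetrized sum is precisely the sum of all labelled planar trees. For the right-hand side, the distinct multilinear $[\cdots]$-terms constituting $t_d^{(k)}$ (the shuffle $k$-ary trees) are indexed by abstract, non-planar $k$-ary trees with bijectively labelled leaves; expanding the totally symmetric product at each of the $v$ internal vertices of such a tree produces exactly its $(k!)^{v}$ planar representatives, each once, because distinct choices of orderings at the vertices yield distinct planar trees. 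Since the forgetful map sending a labelled planar tree to its underlying abstract tree is a well-defined surjection whose fibres are precisely these sets of representatives, summing over all abstract labelled trees again yields every labelled planar tree exactly once.

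Comparing the two expansions shows they coincide, with scalar equal to $1$; thus $\sum_{\pi\in S_d} w_d^{(k)}(a_{\pi(1)},\ldots,a_{\pi(d)})$ equals $t_d^{(k)}(a_1,\ldots,a_d)$ computed with the symmetrized product, and the left-hand side vanishes in $A$ by hypothesis. I expect the only delicate point to be this coefficient bookkeeping: one must verify that no labelled planar tree is produced with multiplicity greater than one on either side. This is where multilinearity is essential, since the leaves carry distinct labels $a_1,\ldots,a_d$ the trees have no nontrivial symmetries, so the action of reordering children at the internal vertices is free and the fibres of the forgetful map have the uniform size $(k!)^{v}$ with no collisions. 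In particular the scalar is exactly $1$, so this step does not even require the standing characteristic-zero assumption, though of course that assumption is available.
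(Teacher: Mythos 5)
Your proposal is correct and is essentially the paper's own argument: the paper's proof is precisely to ``add all permutations of the identity $w_d^{(k)}=0$ and collect terms together,'' obtaining (a scalar multiple of) $t_d^{(k)}=0$ for the symmetrized product. You have merely made explicit the tree-counting bookkeeping (each leaf-labelled planar $k$-ary tree occurring exactly once on both sides) that the paper leaves as a ``direct algebraic manipulation,'' and your observation that the scalar is in fact $1$ is a harmless sharpening.
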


This leads to the following natural question. 

\begin{question}
Let $A$ be the free algebra for the identity $w_d^{(k)}=0$ generated by $x_1,\ldots,x_p$ for some $p\in\mathbb{N}$. For which $k\in\mathbb{N}$ and $d\equiv1\pmod{k-1}$ is the subalgebra of $A$ generated by $x_1,\ldots,x_p$ under the symmetrized product $[-,\ldots,-]$ free for the identity $t_d^{(k)}=0$?
\end{question}

\bibliographystyle{plain} 
\bibliography{WeakNil}

\begin{thebibliography}{10}

\bibitem{MR663785}
Hyman Bass, Edwin~H. Connell, and David Wright.
\newblock The {J}acobian conjecture: reduction of degree and formal expansion
  of the inverse.
\newblock {\em Bull. Amer. Math. Soc. (N.S.)}, 7(2):287--330, 1982.

\bibitem{MR3229356}
Alexei Belov, Leonid Bokut, Louis Rowen, and Jie-Tai Yu.
\newblock The {J}acobian conjecture, together with {S}pecht and {B}urnside-type
  problems.
\newblock In {\em Automorphisms in birational and affine geometry}, volume~79
  of {\em Springer Proc. Math. Stat.}, pages 249--285. Springer, Cham, 2014.

\bibitem{MR3642294}
Murray~R. Bremner and Vladimir Dotsenko.
\newblock {\em Algebraic operads: an algorithmic companion}.
\newblock CRC Press, Boca Raton, FL, 2016.

\bibitem{MR3247244}
Dietrich Burde and Alice Fialowski.
\newblock Jacobi-{J}ordan algebras.
\newblock {\em Linear Algebra Appl.}, 459:586--594, 2014.

\bibitem{OpGb}
Vladimir Dotsenko and Willem Heijltjes.
\newblock Operadic {G}r\"obner bases calculator.
\newblock \url{https://irma.math.unistra.fr/~dotsenko/Operads.html}, 2014.

\bibitem{MR2667136}
Vladimir Dotsenko and Anton Khoroshkin.
\newblock Gr\"{o}bner bases for operads.
\newblock {\em Duke Math. J.}, 153(2):363--396, 2010.

\bibitem{MR4106894}
Vladimir Dotsenko, Martin Markl, and Elisabeth Remm.
\newblock Non-{K}oszulness of operads and positivity of {P}oincar\'{e} series.
\newblock {\em Doc. Math.}, 25:309--328, 2020.

\bibitem{MR1358617}
E.~Getzler and M.~M. Kapranov.
\newblock Cyclic operads and cyclic homology.
\newblock In {\em Geometry, topology, \& physics}, Conf. Proc. Lecture Notes
  Geom. Topology, IV, pages 167--201. Int. Press, Cambridge, MA, 1995.

\bibitem{MR1278792}
Irvin~Roy Hentzel, David~Pokrass Jacobs, Luiz~Antonio Peresi, and
  Sergei~Robertovich Sverchkov.
\newblock Solvability of the ideal of all weight zero elements in {B}ernstein
  algebras.
\newblock {\em Comm. Algebra}, 22(9):3265--3275, 1994.

\bibitem{MR592226}
A.~V. Jag\v{z}ev.
\newblock On a problem of {O}.-{H}. {K}eller.
\newblock {\em Sibirsk. Mat. Zh.}, 21(5):141--150, 191, 1980.

\bibitem{MR1489904}
Susumu Okubo and Noriaki Kamiya.
\newblock Jordan-{L}ie superalgebra and {J}ordan-{L}ie triple system.
\newblock {\em J. Algebra}, 198(2):388--411, 1997.

\bibitem{MR4513786}
Dmitri Piontkovski and Fouad Zitan.
\newblock Bernstein algebras that are algebraic and the {K}urosh problem.
\newblock {\em J. Algebra}, 617:275--316, 2023.

\bibitem{MR4038058}
Ualbai Umirbaev.
\newblock Polarization algebras and their relations.
\newblock {\em J. Commut. Algebra}, 11(3):433--451, 2019.

\bibitem{Yag}
A.V. Yagzhev.
\newblock Endomorphisms of polynomial rings and free algebras of different
  varieties.
\newblock {\em Proceedings of Shafarevich Seminar}, pages 15--47, 2000.

\bibitem{MR3598575}
Pasha Zusmanovich.
\newblock Special and exceptional mock-{L}ie algebras.
\newblock {\em Linear Algebra Appl.}, 518:79--96, 2017.

\end{thebibliography}

\end{document}